\crefname{hypothesis}{Hypothesis}{Hypotheses}
\title{A fast iterative algorithm for  near-diagonal eigenvalue problems\thanks{Submitted to the editors in Dec. 2020. \funding{Funding for this work was provided by the Alexander von Humboldt
Foundation in the framework of the Sofja Kovalevskaja Award endowed by
the German Federal Ministry of Education and Research.}}}
\author{Maseim Kenmoe\thanks{University of Dschang, Cameroon, and Max Planck Institute for the Mathematical Sciences, Leipzig, Germany}
\and Ronald Kriemann\thanks{Max Planck Institute for the Mathematical Sciences, Leipzig, Germany}
\and Matteo Smerlak\footnotemark[3]
\and Anton S. Zadorin\footnotemark[3]}
\DeclareMathOperator{\diag}{diag}
\newcommand{\D}{\partial}
\newcommand{\Z}{Z} 
\newcommand{\eigval}{\lambda} 
\newcommand{\Eigval}{\Lambda} 
\newcommand{\parameter}{\varepsilon} 
\newcommand{\evzero}{\mathring{\lambda}} 
\begin{document}

\maketitle

\begin{abstract}
We introduce a novel eigenvalue algorithm for near-diagonal matrices inspired from Rayleigh-Schr\"odinger perturbation theory and termed \textit{iterative perturbative theory} (IPT). Contrary to standard eigenvalue algorithms, which are either `direct' (to compute all eigenpairs) or `iterative' (to compute just a few), IPT computes any number of eigenpairs with the same basic iterative procedure. Thanks to this perfect parallelism, IPT proves more efficient than classical methods (LAPACK or CUSOLVER for the full-spectrum problem, preconditioned Davidson solvers for extremal eigenvalues). We give sufficient conditions for linear convergence and demonstrate performance on dense and sparse test matrices, including one from quantum chemistry. The code is available at \url{http://github.com/msmerlak/IterativePerturbationTheory.jl}.
\end{abstract}

\begin{keywords}
eigenvalue algorithm, perturbation theory, iterative method
\end{keywords}

\begin{AMS}
  65F15
\end{AMS}

\section{Introduction}

How can one compute the eigenvectors of a matrix $M$ that is already close to being diagonal, or equivalently when approximate eigenvectors are known? In physics and chemistry, a standard approach, Rayleigh-Schr\"odinger perturbation theory \cite{rayleigh1894, kato1995},  attempts to compute these eigenvectors as power series in the `perturbation' $\Delta = M - D$, where $D$ is diagonal and $\Delta$ is small. The purpose of this paper is to show that a variant of this method, based on fixed-point iteration rather than power series expansion, can be the basis for a new kind of preconditioned eigenvalue algorithm. We stress from the outset that, like Rayleigh-Schr\"odinger perturbation theory, this algorithm (termed Iterative Perturbation Theory, IPT for short) has a limited scope: if $M$ is not sufficiently close to being diagonal, IPT will not converge. But in those cases---common in physical \cite{rayleigh1894, pozar1998} or chemical \cite{ostlund1996} applications---where $M$ is in fact near-diagonal, IPT proves much more efficient than established methods, sometimes by an order of magnitude or more.

Eigenvalue algorithms are usually classified in `direct' and `iterative' methods \cite{demmel1997}. These correspond to very different approaches to the eigenvalue problem. Direct methods, such as the QR and divide-and-conquer algorithms, reduce the the matrix into Hessenberg (or tridiagonal) form before they can compute complete set of eigenvectors.\footnote{One exception is Jacobi's algorithm for symmetric matrices, which annihilates off-diagonal elements with two-dimensional rotations. Because the complexity of Jacobi's algorithm is larger than that of tridiagonal reduction, this algorithm is not used in practice, except perhaps for small problems on GPUs.} Unfortunately, this initial step is difficult to parallelize, breaks any sparsity patterns in $M$, and has unfavorable (cubic) complexity. For this reason, direct algorithms are only applicable to small matrices with size $n \lesssim 10^4$. 

Iterative methods, on the other hand, only aim to compute a few eigenpairs of $M$ through `matrix-free' applications of $M$ to vectors. Because they do not require reducing (or even explicitly forming) $M$, iterative methods can be applied to much larger problems than direct approaches, particularly when $M$ is sparse. Central to most of these methods is the Rayleigh-Ritz procedure: at each iteration, a small subspace is formed (e.g. by expansion from a smaller subspace), and the eigenvectors of the matrix projected to that subspace are computed. This is the basis for classical Krylov methods (Arnoldi, Lanczos), but also for preconditioned methods which use an approximation of $M^{-1}$ to speed up convergence. The latter include Davidson methods (Generalized Davidson \cite{Davidson_1975, Morgan_2009}, Jacobi-Davidson \cite{fokkema1998jacobi, sleijpen2000jacobi}), Locally Optimal Block Preconditioned Conjugate Gradient (LOBPCG) \cite{knyazev2017recent}, etc. 

Our algorithm has a different structure. For starters, IPT makes no distinction between direct and iterative procedures: it computes any desired number of eigenpairs in parallel, from one to all of them, using the same basic iterative procedure. Second, IPT does not involve any reduction of the matrix, relying instead on repeated products $M\times Z$, where $Z$ is an $n\times k$ matrix when $k$ eigenpairs are requested. If $k = 1$, this is a matrix-vector product, as in iterative methods; when $k = N$ (the full spectrum problem), this is a matrix-matrix product, with sub-cubic theoretical complexity $\mathcal{O}(n^{2.376})$. Third, IPT does not rely on the Rayleigh-Ritz procedure or external eigensolvers to solve a projected problem---it is completely self-contained. Fourth, existing methods normally distinguish between symmetric (or Hermitian) and non-symmetric problems. IPT does not assume or use symmetry and is equally efficient with non-symmetric problems. Finally, and most importantly in our view, perhaps, IPT reduces the eigenvalue equation into an elementary fixed-point problem. This means that the vast repertoire of methods developed in numerical for solving non-linear equations, including fixed-point acceleration methods, become directly relevant to the eigenvalue problem. We feel that this connection between non-linear analysis and numerical linear algebra is the most appealing aspect of IPT.

Our presentation starts with a reformulation of the eigenvalue equation as a fixed point equation for a quadratic map in complex projective space (\cref{fixed-points}). We then establish a sufficient condition for fixed point iteration to converge and illustrate its divergence for larger perturbations with a simple two-dimensional example (\cref{convergence}). An interesting aspect of IPT is its compabibility with fixed-point acceleration methods (\cref{acceleration}). Next, we consider the computational efficiency of our method on multi-CPU and GPU architectures using test matrices, either sampled at random or drawn from chemistry applications (\cref{performance}). We conclude with some possible directions for future work.

\section{Definitions and notations}\label{definitions}

Below we denote complex vectors and functions to vector spaces by bold face lowercase letters: $\mathbf v \in \mathbb C^n$. We denote coordinates of vector $\mathbf v$ in the standard basis of $\mathbb C^n$ either by $(\mathbf{v})_i$ or by $v_i$, using the same letter but in Roman. For matrices and functions to spaces of matrices we use upper case Roman letters. Superscripts in parentheses (e.g. $x^{(k)}$) denote the order of an approximation, while the superscripts in square brackets (e.g. $x^{[k]}$) enumerate terms in asymptotic series.

Everywhere in the text we understand by \emph{genericity} of an object in some set $\mathbb C^n$ with the standard topology the condition that that object belongs to a fixed, open, everywhere dense subset of $\mathbb C^n$. A phrase ``condition $A$ is false for a generic object $O \in S$'' should be read as ``condition $A$ corresponds to a closed nowhere dense subset of $S$''. For example, genericity of an $n\times n$ matrix $M$ is understood with $M$ viewed as an element of $\mathbb C^{n\times n}$. A phrase ``eigenvalues of a generic $n\times n$ matrix are pairwise different'' should be understood as ``the set of matrices with eigenvalues of higher multiplicity form a nowhere dense closed subset in $\mathbb C^{n\times n}$''. Genericity of a family of partitions (defined below) $D + \parameter \Delta$, where $D = \diag(\mathbf{v})$, $\mathbf{v} \in \mathbb C^n$, and $\Delta \in \mathbb C^{n\times n}$, $\parameter \in \mathbb C$, is understood as genericity of a point $(\mathbf v,\Delta,\parameter) \in \mathbb C^n \times \mathbb C^{n\times n} \times \mathbb C$, and so on.

We use some elementary notions from projective and differential geometry. Their definitions are given below.

\begin{figure}
    \centering
    \includegraphics[width=\textwidth]{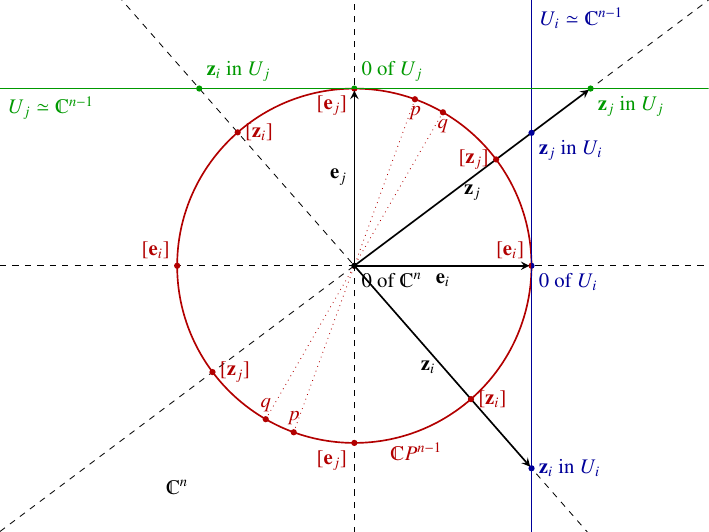}
    \caption{
        A schematic sketch of the geometrical relations between the original space $\mathbb{C}^n$, the projective space $\mathbb{C}P^{n-1}$, its affine charts, and points in these different spaces that correspond to certain eigenvectors. The projective space $\mathbb{C}P^{n-1}$ (in red) can be associated with the unit sphere in $\mathbb{C}^n$, in which each circle carved out in it by a complex line passing through the origin (a special case of a real plane) is glued to form a single point (here exemplified by two abstract points $p$ and $q$, the circles are degenerated to pairs of points in real settings). The standard basis $\{\mathbf{e}_k\}$ of $\mathbb{C}^n$ (eigenvectors of $D$) defines affine charts $U_k$ of $\mathbb{C}P^{n-1}$ (only two are depicted: for $k=i$ in blue and $k=j$ in green). $U_i$ and $U_j$ are shown as affine subspaces of $\mathbb{C}^n$. Note that $\mathbf{e}_j$ is at infinity of $U_i$ and $\mathbf{e}_i$ is at infinity of $U_j$. An actual eigenvector and all its nonzero rescalings correspond to a single point in $\mathbb{C}P^{n-1}$ and in each $U_k$. Two eigenvectors $\mathbf{z}_i$ and $\mathbf{z}_j$ are depicted with a rescaling choice such that they are equal to the $i$-th and $j$-th columns of matrix $\Z$.
    }
    \label{sketch}
\end{figure}

Informally, the $(n-1)$-dimensional projective space $\mathbb{C}P^{n-1}$ is the space of complex lines (1-dimensional complex subspaces) of the vector space $\mathbb{C}^n$ (the notion is generalizable to vectors spaces over fields different from $\mathbb{C}$). It is constructed in the following way. Consider an equivalence relation of \emph{nonzero} vectors $\mathbf{v},\mathbf{w}\in\mathbb{C}^n$ given by $\mathbf{v} \sim \mathbf{w}$ if $\exists \alpha \in \mathbb{C}$ such that $\mathbf{v} = \alpha \mathbf{w}$. In other words, all collinear vectors, with exception of $0$, are declared to be equivalent.

\begin{definition}
The \emph{projective space} $\mathbb{C}P^{n-1}$ is the factor-space $(\mathbb{C}^n\setminus\{0\})/{\sim}$ by the indicated equivalence relation with the associated factor-topology.
\end{definition}

We care about eigenvectors only up to normalization, therefore the projective space is a natural space that contains them as its points. As usual, we denote the equivalence class of a vector $\mathbf{z}\in\mathbb{C}^n$ (a point in $\mathbb{C}P^{n-1}$) as $[\mathbf{z}]$, and $\mathbf{z}$ is called a representative of that class. The coordinates of $\mathbb{C}^n$ can be used to (nonuniquely) parametrize points of $\mathbb{C}P^{n-1}$ in the following way.

\begin{definition}
A tuple $[z_1:z_2:\dots:z_n]$ is called \emph{homogeneous coordinates} of a point $p\in\mathbb{C}P^{n-1}$ if there is a vector $\mathbf{z}\in\mathbb{C}^n$ such that $p = [\mathbf{z}]$ and $\mathbf{z}$ has coordinates $(z_1,z_2,\ldots,z_n)$.
\end{definition}

Thus each point of the projective space has a set of homogeneous coordinates, but all of them are related by homogeneous rescaling: two homogeneous coordinates of the same point $[x_1:\dots:x_n]$ and $[y_1:\dots:y_n]$ are related by $x_i = \alpha y_i$ with the same $\alpha$ for each $i$.

$\mathbb{C}P^{n-1}$ has a structure of a complex $(n-1)$-dimensional manifold.

\begin{definition}
A \emph{complex (coordinate) chart} on a topological space $M$ is a homeomorphism $\phi\colon V\to O\in\mathbb{C}^n$ from some open set $V \subset M$ to an open set $O$ of $\mathbb{C}^n$ with the standard topology for some dimension $n$.
\end{definition}

It is convenient to denote a chart $\phi$ defined on $V\subset M$ with $(V,\phi)$ explicitly marking its domain.

\begin{definition}
Let there be two complex charts $(V,\phi)$ and $(W,\psi)$, $V,W \subset M$, and let their ranges have the same dimensionality $n$. The homeomorphism $\phi\circ\psi^{-1}$ between $\psi(V\cap W) \subset \mathbb{C}^n$ and $\phi(V\cap W) \subset \mathbb{C}^n$ is called a \emph{transition function} (between charts $\psi$ and $\phi$).
\end{definition}

\begin{definition}
A topological space $M^n$ is called a \emph{complex $n$-dimensional manifold} if 1) there is a set of charts $\{(U_i,\phi_i)\}_{i\in I}$ ($I$ is some index set) of $M^n$ such that $\bigcup_{i\in I} U_i = M^n$, 2) all the ranges of $\phi_i$ have the same dimensionality $n$ and 3) all the transition functions $\phi_i\circ\phi_j^{-1}$ are holomorphic functions between open subset of $\mathbb{C}^n$. The system $\{(U_i,\phi_i)\}_{i\in I}$ is called a \emph{holomorphic atlas} on $M^n$.
\end{definition}

An atlas allows to work with a manifold using usual methods of linear algebra and analysis in $\mathbb{C}^n$. The standard atlas of charts that gives $\mathbb{C}P^{n-1}$ a structure of an $(n-1)$-dimensional complex manifold is given by $n$ special charts. For each $i$, $1 \leq i \leq n$, define $U_i = \{[\mathbf{z}]:z_i \neq 0\}$, label coordinates of $\mathbb{C}^{n-1}$ by $\zeta_1$, \ldots, $\zeta_{i-1}$, $\zeta_{i+1}$, \ldots, $\zeta_n$ ($i$-th label is omitted) and set $\zeta_k = z_k/z_i$, which defines a map $\phi_i\colon U_i\to\mathbb{C}^{n-1}$. This defines a complex chart on $\mathbb{C}P^{n-1}$ with the whole of $\mathbb{C}^{n-1}$ as its image. It is easy to see that $\{U_i,\phi_i\}_{1\leq i\leq n}$ forms a holomorphic atlas of $\mathbb{C}P^{n-1}$. The transition functions from $\phi_i$ with its range coordinatized by $\zeta_k$ and $\phi_j$ with its range coordinatized by $\eta_k$ (with the same labeling omission rule for each chart as above) is given by $\eta_k = \zeta_k/\zeta_j$ for $k \neq i,j$ and $\eta_i = 1/\zeta_j$. They are simple rational maps and therefore are holomorphic.

In fact, the action of $\phi_i$ on $[\mathbf{z}]$ can be understood as setting $z_i = 1$, rescaling the rest of $z_k$ accordingly and taking the whole tuple $(z_1,\ldots,z_n)$ as coordinates of a point in the original space $\mathbb{C}^n$. The image of the chart $(U_i,\phi_i)$ becomes an $(n-1)$-dimensional affine subspace of the original space. In the following we will abuse notations by denoting the domain of the $i$-th chart, the chart itself, and the corresponding affine subspace with the same symbol $U_i$. We will refer to these special charts as \emph{affine charts}. Geometric relationships between $U_i$ and the action of the transition functions as projections between them is schematically shown in \cref{sketch} (other notations on this sketch will be introduced below).

Polynomials and their systems can be used to define subsets on linear complex spaces.

\begin{definition}
Let $\{P_k\}_k$ be a system of polynomials in $n$ variables $\{z_i\}_i$. The subset $V = \{\mathbf{z} \in \mathbb{C}^n: \forall\,k\,P_k(z_1,\dots,z_n) = 0\}$ is called an \emph{algebraic variety}.
\end{definition}

A polynomial $P$ such that $\forall \alpha \in \mathbb{C}$ $P(\alpha z_1,\dots,\alpha z_n) = \alpha^k P(z_1,\dots,z_n)$ is called a \emph{homogeneous polynomial}. It is obvious that if a point $\mathbf{z}$ is a root of a homogeneous polynomial, then every representative of its equivalence class $[\mathbf{z}]$ is a root, too. Thus homogeneous polynomials can be used to define subsets in projective space.

\begin{definition}
Let $\{P_k\}_k$ be a system of homogeneous polynomials in $n$ variables $\{z_i\}_i$. The subset $V = \{[\mathbf{z}] \in \mathbb{C}P^{n-1}: \forall\,k\,P_k(z_1,\dots,z_n) = 0\}$ is called a \emph{projective variety}.
\end{definition}

Note that any projective variety defines an algebraic variety in each affine chart $U_i$.

\section{Eigenvectors as fixed points}\label{fixed-points}

Consider an $n\times n$ complex matrix $M$. Its eigenvectors are elements of $\mathbf{z} \in\mathbb C^n$.
\begin{lemma}
The eigenvectors of $M$ are in one-to-one correspondence with nonzero solutions of the systems $\{\mathcal E_{ij}\}_{i,j}$ of polynomial equations in coordinates of $\mathbf{z}$ for all $i$ and $j$, where
$$
\mathcal E_{ij}:(M\mathbf{z})_j z_i = (M\mathbf{z})_i z_j.
$$
\end{lemma}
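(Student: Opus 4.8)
The plan is to show the equivalence in two directions, working with the eigenvalue equation $M\mathbf{z} = \eigval\mathbf{z}$ componentwise. First I would note that the system $\{\mathcal{E}_{ij}\}_{i,j}$ is exactly the statement that the two vectors $M\mathbf{z}$ and $\mathbf{z}$ are linearly dependent: for nonzero $\mathbf{z}$, the equations $(M\mathbf{z})_j z_i = (M\mathbf{z})_i z_j$ for all $i,j$ say precisely that all $2\times 2$ minors of the $n\times 2$ matrix with columns $M\mathbf{z}$ and $\mathbf{z}$ vanish, which is equivalent to that matrix having rank at most one. Since $\mathbf{z}\neq 0$, this means $M\mathbf{z}$ is a scalar multiple of $\mathbf{z}$.

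The forward direction is immediate: if $\mathbf{z}$ is an eigenvector, then $M\mathbf{z} = \eigval\mathbf{z}$ for some $\eigval\in\mathbb{C}$, so $(M\mathbf{z})_j z_i = \eigval z_j z_i = (M\mathbf{z})_i z_j$, hence every $\mathcal{E}_{ij}$ holds. For the reverse direction, suppose $\mathbf{z}\neq 0$ satisfies all $\mathcal{E}_{ij}$. Pick an index $i_0$ with $z_{i_0}\neq 0$ and set $\eigval = (M\mathbf{z})_{i_0}/z_{i_0}$. Then for every $j$, the equation $\mathcal{E}_{i_0 j}$ reads $(M\mathbf{z})_j z_{i_0} = (M\mathbf{z})_{i_0} z_j = \eigval z_{i_0} z_j$, and dividing by $z_{i_0}\neq 0$ gives $(M\mathbf{z})_j = \eigval z_j$ for all $j$, i.e. $M\mathbf{z} = \eigval\mathbf{z}$, so $\mathbf{z}$ is an eigenvector. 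The correspondence is one-to-one because the map sending an eigenvector to itself as a solution of $\{\mathcal{E}_{ij}\}$ and back is the identity; one should also remark that the construction of $\eigval$ does not depend on the choice of $i_0$, since any two choices yield the same $\eigval$ by the eigenvalue relation just derived.

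I do not expect any serious obstacle here — the statement is essentially a repackaging of the eigenvalue equation, and the only mild subtlety is making sure the argument is uniform in the choice of a nonzero coordinate and that one explicitly observes the rank-one (vanishing-minors) interpretation that makes the polynomial system natural. If one wants to be fully careful, the one point worth stressing is that some of the $\mathcal{E}_{ij}$ are redundant (the diagonal ones $\mathcal{E}_{ii}$ are trivial, and $\mathcal{E}_{ji} = -\mathcal{E}_{ij}$), but since the lemma only claims a correspondence with the full system this redundancy causes no trouble. The proof will be a few lines.
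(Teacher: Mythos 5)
Your proof is correct and takes essentially the same route as the paper: the paper phrases the system $\{\mathcal E_{ij}\}_{i,j}$ as the vanishing of the tensor $M\mathbf{z}\otimes\mathbf{z}-\mathbf{z}\otimes M\mathbf{z}$, whose components are precisely the $2\times 2$ minors you invoke, and concludes collinearity of $M\mathbf{z}$ and $\mathbf{z}$. Your explicit choice of a nonzero coordinate $z_{i_0}$ to extract the eigenvalue is just a more detailed spelling-out of the same equivalence.
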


\begin{proof}
The system $\{\mathcal E_{ij}\}_{i,j}$ states that the tensor $M\mathbf{z}\otimes \mathbf{z} - \mathbf{z}\otimes M\mathbf{z}$ vanishes. This statement is equivalent to the statement that $M\mathbf{z}$ and $\mathbf{z}$ are collinear.
\end{proof}

As already noted, the eigenvectors of matrix $M$ are naturally identified with elements of the complex projective space $\mathbb{C}P^{n-1}$. From this point of view, the system $\{\mathcal E_{ij}\}_{i,j}$ defines a projective variety.

Fix an index $i$. To find the full set of eigenvectors in a single chart $U_i$ it is enough to solve a smaller set of equations.

\begin{lemma}\label{subsystem-equivalence}
Eigenvectors $\mathbf z$ of $M$ such that $[\mathbf z] \in U_i$ are in one-to-one correspondence with the solutions of the system of equations $\{\mathcal E_{ij}\}_{j:j\neq i}$ in the same chart.
\end{lemma}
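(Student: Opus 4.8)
The plan is to show that the small system $\{\mathcal E_{ij}\}_{j:j\neq i}$ already forces the full tensor $M\mathbf z\otimes\mathbf z - \mathbf z\otimes M\mathbf z$ to vanish, provided $[\mathbf z]\in U_i$, i.e. $z_i\neq 0$. One direction is immediate: if $\mathbf z$ is an eigenvector with $z_i\neq 0$, then by the previous lemma the whole system $\{\mathcal E_{ij}\}_{i,j}$ holds, so in particular its subsystem $\{\mathcal E_{ij}\}_{j:j\neq i}$ holds, and distinct eigenvectors in $U_i$ give distinct points of $U_i$ tautologically.

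For the converse, suppose $z_i\neq 0$ and $\mathcal E_{ij}$ holds for all $j\neq i$; I want to recover $\mathcal E_{k\ell}$ for arbitrary $k,\ell$. The key step is the algebraic identity
\[
z_i\bigl((M\mathbf z)_\ell z_k - (M\mathbf z)_k z_\ell\bigr)
= z_k\bigl((M\mathbf z)_\ell z_i - (M\mathbf z)_i z_\ell\bigr)
- z_\ell\bigl((M\mathbf z)_k z_i - (M\mathbf z)_i z_k\bigr),
\]
which one checks by expanding the right-hand side: the two terms involving $(M\mathbf z)_i$ cancel, leaving exactly $z_i\bigl((M\mathbf z)_\ell z_k - (M\mathbf z)_k z_\ell\bigr)$. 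Now the bracket multiplying $z_k$ is $-\mathcal E_{i\ell}$ and the bracket multiplying $z_\ell$ is $-\mathcal E_{ik}$ (taking the convention that $\mathcal E_{ii}$ is the trivial identity $0=0$), hence both vanish by hypothesis, so the left-hand side vanishes. Since $z_i\neq 0$, we may divide and conclude $\mathcal E_{k\ell}$ holds. Thus the full system is satisfied, and by the previous lemma $\mathbf z$ is an eigenvector; its class lies in $U_i$ because $z_i\neq0$. The correspondence is one-to-one because it is literally the restriction of the bijection from the previous lemma to the preimage of $U_i$.

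I expect the only real content — and the main thing to get right — is the three-term Plücker-type identity above and the bookkeeping of signs and of the degenerate cases where indices among $i,k,\ell$ coincide (each of which reduces $\mathcal E_{k\ell}$ to a trivial identity or directly to one of the hypotheses). Everything else is a direct appeal to the preceding lemma. I would present the identity as a one-line display, note the cancellation of the $(M\mathbf z)_i$ terms, and invoke $z_i\neq0$ to finish.
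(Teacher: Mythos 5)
Your proposal is correct. The three-term identity does what you claim: expanding the right-hand side, the two $(M\mathbf z)_i$ terms cancel, the brackets are (up to an irrelevant sign) the polynomials $(M\mathbf z)_\ell z_i-(M\mathbf z)_i z_\ell$ and $(M\mathbf z)_k z_i-(M\mathbf z)_i z_k$ of $\mathcal E_{i\ell}$ and $\mathcal E_{ik}$, which vanish by hypothesis (trivially when $k=i$ or $\ell=i$), and dividing by $z_i\neq 0$ gives $\mathcal E_{k\ell}$; the preceding lemma then yields an eigenvector, and the forward direction and bijectivity are immediate restrictions of that lemma. The paper takes a more direct route for the converse: in the chart one may normalize $z_i=1$, so the subsystem reads $(M\mathbf z)_j=(M\mathbf z)_i z_j$ for all $j\neq i$, and setting $\lambda=(M\mathbf z)_i$ this is exactly $M\mathbf z=\lambda\mathbf z$ (the $i$-th component being automatic); for the other direction it substitutes $\lambda=(M\mathbf z)_i/z_i$ back into the eigenvalue equation. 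Your detour through the full system buys a slightly stronger algebraic fact---on $\{z_i\neq 0\}$ the subsystem $\{\mathcal E_{ij}\}_{j:j\neq i}$ implies every $\mathcal E_{k\ell}$, which dovetails with the paper's remark that the extra points of $V_i$ can only lie at infinity of $U_i$---whereas the paper's computation is shorter and exhibits the eigenvalue explicitly as $\lambda=(M\mathbf z)_i$, the formula the algorithm later returns as $\lambda_i=D_{ii}+(\Delta\mathbf z)_i$. The only quibble is the stated sign of the brackets, which is immaterial since they are zero.
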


\begin{proof}
We need to prove that for $[\mathbf{z}] \in U_i$ the two following propositions are equivalent: $(i)$ $\mathbf{z}$ is a root of $(M\mathbf{z})_j z_i = (M\mathbf{z})_i z_j$ for $j \neq i$ and $(ii)$ $\exists \eigval \in \mathbb C$ $M\mathbf{z} = \eigval \mathbf{z}$.

The $(i)$ $\Rightarrow$ $(ii)$ direction. For $[\mathbf{z}] \in U_i$ it is enough to consider only vectors with $z_i = 1$. Then from $\{\mathcal E_{ij}\}_{j:j\neq i}$ we conclude that $(M\mathbf{z})_j = (M\mathbf{z})_i z_j$ for all $j \neq i$. If we now denote the common factor in these expressions as $\eigval = (M\mathbf{z})_i$, then $(ii)$ immediately follows.

The $(ii)$ $\Rightarrow$ $(i)$ direction. Consider an eigenvector $\mathbf{z}$ of $M$ such that $z_i \neq 0$, and thus $[\mathbf{z}] \in U_i$. From the eigenvalue
equation $M\mathbf{z}=\eigval \mathbf{z}$ we can express its eigenvalue as $\eigval=(M \mathbf{z})_i/z_i$; inserting this back into $M \mathbf{z}=\eigval \mathbf{z}$ shows that $\mathbf{z}$ is a root of the system $\{\mathcal E_{ij}\}_{j:i\neq j}$.
\end{proof}

Denoting $V_i$ the projective variety defined by the system $\{\mathcal E_{ij}\}_{j:j\neq i}$ for a fixed $i$ and recalling that the set $\{U_i\}_i$ forms an atlas we conclude that the set of eigenvectors of $M$ can be identified with the set $\cup_i(V_i\cap U_i)$, which is in fact the projective variety $\cap_i V_i$. Since eigenvectors generically have non-zero coordinates in all directions, each component $V_i\cap U_i$ typically contains the complete set of eigenvectors.

It should be noted, however, that $V_i$ taken alone may have points unrelated to the eigenvectors of $M$, but these additional points can only be at infinity in $U_i$ by Lemma~\ref{subsystem-equivalence}. Indeed, consider points of $V_i$ such that $z_i = 0$. At the very least they include points that are given by $(M\mathbf{z})_i = 0$ in addition to $z_i = 0$. Thus, in general, there is a whole ($n-3$)-dimensional complex projective subspace of such solutions in $\mathbb CP^{n-1}$ (corresponding to a ($n-2$)-dimensional complex hyperplane in the affine space $\mathbb C^n$).

\begin{algorithm}[t]
    \caption{One eigenvector of a near-diagonal matrix $M$}\label{single_line_algo}
    \begin{algorithmic}[1]
     
        \STATE{Choose a partition $M = D + \Delta$ with $D$ diagonal}
        \STATE{Compute $\mathbf{g}_i \leftarrow ((D_{jj}-D_{ii})^{-1})_{j:j\neq i}$ and set $(\mathbf g_i)_i =0$}
        \STATE Initialize $\mathbf{z} \leftarrow \mathbf{e}_i$
        \WHILE{$\vert\mathbf z - \mathbf{f}_i(\mathbf z)\vert/\vert\mathbf z\vert >\eta$ for some tolerance $\eta$}        \STATE{$\mathbf z \leftarrow \mathbf{f}_i(\mathbf z)$ with $\mathbf{f}_i$ defined by \cref{single_line_dyn}}
        \ENDWHILE
        \STATE{Return eigenvector $\mathbf{z}$}
        \STATE{Return eigenvalue $\eigval_i = (M \mathbf{z})_i$}
    
    \end{algorithmic}
    \end{algorithm}

We now further assume a partitioning
$$M=D+\Delta$$
with a diagonal part $D$ and the residual part $\Delta$. The motivation comes from physics, where the diagonal part $D$ often consists of unperturbed
eigenvalues $\evzero_i=D_{ii}$ and $\Delta$ represents perturbations in the so-called perturbation theory. In practice $D$ can be taken as the diagonal elements of $M$, although different partitionings can sometimes be more appropriate \cite{Surj_n_2004}. Provided the $\evzero_i$'s are all simple (non-degenerate, that is pairwise different), we can rewrite the polynomial system $\{\mathcal E_{ij}\}_{j:j\neq i}$ for a particular fixed $i$ as
$$
(D\mathbf{z} + \Delta\mathbf{z})_j z_i = (D\mathbf{z} + \Delta\mathbf{z})_i z_j\ \textrm{for}\ j\neq i,
$$
or equivalently
$$
\evzero_j z_j z_i + (\Delta\mathbf{z})_j z_i = \evzero_i z_i z_j + (\Delta\mathbf{z})_i z_j\ \textrm{for}\ j\neq i,
$$
which yields
$$
z_i z_j=(\evzero_j-\evzero_i)^{-1}\left(z_j(\Delta \mathbf{z})_i-z_i(\Delta \mathbf{z})_j\right)\ \textrm{for}\ j\neq i.
$$
In the affine subspace $U_i$ this gives (by setting $z_i = 1$)
$$
z_j=(\evzero_j-\evzero_i)^{-1}\left(z_j(\Delta \mathbf{z})_i-(\Delta \mathbf{z})_j\right)\ \textrm{for}\ j\neq i.
$$
Thus, the eigenvectors of $M$ in $U_i$ can be identified with the solutions of the fixed point equation $\mathbf z = \mathbf{f}_i(\mathbf z)$ with $\mathbf{f}_i:\mathbb C^n\to \mathbb C^n$ the map 
\begin{equation}
	\label{single_line_dyn} 
	\mathbf{f}_i(\mathbf{z}) = \mathbf{e}_i + \mathbf{g}_i\circ(\mathbf{z} (\Delta \mathbf{z})_i - \Delta \mathbf{z})
\end{equation}
where $\mathbf{e}_i$ is the $i$-th standard basis vector of $\mathbb{C}^n$, $\mathbf{g}_i$ is the $i$-th column of the \emph{inverse gaps matrix} $G$ with components $G_{jk} = (\mathbf g_k)_j = (\evzero_j-\evzero_k)^{-1}$ for $j\neq k$ and $G_{jj} = (\mathbf g_j)_j = 0$. Here $\circ$ denotes the component-wise product of vectors in the standard basis. To obtain the eigenvector of $M$ closest to the basis vector $\mathbf{e}_i$, we can try to solve \cref{single_line_dyn} by fixed-point iteration; this is the basic idea of our method, presented in Algorithm \ref{single_line_algo}. As noted, each iteration of $\mathbf f_i$ consists of a single multiplication of the present vector by the perturbation $\Delta$, followed by element-wise multiplication by the vector $\mathbf{g}_i$. The resulting solution will be automatically normalized such that its $i$-th coordinate is equal to 1.

The same iterative technique can be used to compute all eigenvectors of $M$ in parallel (Algorithm \ref{full_spectrum_algo}). For this it suffices to bundle all $n$ candidate
eigenvectors for each $i$ into a matrix $\Z$ and apply the map $\mathbf{f}_i$ to the $i$-th column of $\Z$. This corresponds to the matrix map

\begin{equation}
\label{dyn} 
F(\Z)\equiv I+G\circ\Big(\Z\,\mathcal{D}(\Delta \Z) - \Delta \Z\Big),
\end{equation}
where $\,\circ\,$ denotes the Hadamard (element-wise) product of matrices and $\mathcal{D}(X)$ is the diagonal matrix built with the diagonal elements of matrix $X$. Starting from $\Z^{(0)} = I$, we obtain a sequence of matrices $\Z^{(k)} = F(\Z^{(k-1)})$ whose limit as $k\to\infty$, if exists, is the full set of eigenvectors. We call this approach \textit{iterative perturbation theory} (IPT).

\begin{algorithm}[t]
\caption{Full eigendecomposition of a near-diagonal matrix $M$}
\begin{algorithmic}[1]
     \label{full_spectrum_algo}
    \STATE{Choose a partition $M = D + \Delta$ with $D$ diagonal}
    \STATE{Compute $G \leftarrow ((D_{ii}-D_{jj})^{-1})_{i\neq n}$ and set $g_{ii} =0$}
    \STATE{Initialize $\Z \leftarrow I$}
    \WHILE{$\Vert \Z -F(\Z)\Vert/\Vert \Z\Vert > \eta $ for some tolerance $\eta$}
        \STATE{$\Z \leftarrow F(\Z)$ with $F$ defined by \cref{dyn}}
    \ENDWHILE
    \STATE{Return eigenmatrix $\Z$}
    \STATE{Return eigenvalues $\Eigval = \mathcal{D}(M \Z)$}
\end{algorithmic}
\end{algorithm}

\section{Convergence and divergence}\label{convergence}

In this section we look at the convergence of fixed-point iteration for the map \cref{dyn}. In a nutshell, the off-diagonal elements $\Delta$ must be small compared to the diagonal gaps $\evzero_i-\evzero_j$, a typical condition for eigenvector perturbation theory \cite{kato1995}. 

\subsection{A sufficient condition for convergence}

Let $\|\cdot\|$ denote the spectral norm of a matrix, i.e. its largest singular value. Let $M \in \mathbb C^{n\times n}$ be a matrix. Let $M = D + \Delta$ be its partition into a diagonal matrix $D$ and the residual matrix $\Delta$ such that the corresponding to $D$ matrix of inverse gaps $G$ is defined, which implies that all diagonal elements of $D$ are pairwise different. Let $F:\mathbb C^{n\times n} \to \mathbb C^{n\times n}$ be the mapping defined by $G$ and $\Delta$ as in the previous section.

\begin{theorem}\label{main-th}
If

\begin{equation}
\label{condition-unique}
\|G\|\|\Delta\| < 3 - 2\sqrt{2},
\end{equation}
then the dynamical system defined by iterations of $F$ and $A^{(0)} = I$ converges to a unique, asymptotically stable fixed point in the ball $B_{\sqrt{2}}(I)$.
\end{theorem}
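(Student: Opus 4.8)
The plan is to recast Theorem~\ref{main-th} as a standard Banach-fixed-point argument for the map $F$ restricted to the closed ball $\overline{B_{\sqrt2}(I)}$ in $(\mathbb{C}^{n\times n},\|\cdot\|)$. First I would record the elementary norm estimates for the building blocks of $F(\Z)=I+G\circ(\Z\,\mathcal D(\Delta\Z)-\Delta\Z)$: since the Hadamard product with a fixed matrix $G$ satisfies $\|G\circ X\|\le \|G\|\,\|X\|$ only up to a constant in general, I would instead use the cleaner bound that the Hadamard multiplier norm of $G$ is at most $\|G\|$ when acting in the way it does here — more safely, one bounds $\|G\circ X\|\le \|G\|\|X\|$ via the Schur product theorem style estimate, and $\|\mathcal D(X)\|\le\|X\|$, $\|\Delta\Z\|\le\|\Delta\|\|\Z\|$. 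Writing $q=\|G\|\|\Delta\|$ and $\|\Z-I\|\le r$, one gets $\|\Z\|\le 1+r$ and hence
\begin{equation}
\label{eq:F-bound}
\|F(\Z)-I\|\le q\big(\|\Z\|\,\|\Z\|+\|\Z\|\big)=q\big((1+r)^2+(1+r)\big).
\end{equation}
Actually the relevant combination is $\|\Z\mathcal D(\Delta\Z)-\Delta\Z\|\le (1+r)^2\|\Delta\|+ (1+r)\|\Delta\|$ after pulling out $G$; I would simplify to $\|F(\Z)-I\|\le q(1+r)(2+r)$ or a similar clean majorant.

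Second, the self-map step: I need $q(1+r)(2+r)\le r$ at $r=\sqrt2$, i.e. $q(1+\sqrt2)(2+\sqrt2)\le\sqrt2$. Since $(1+\sqrt2)(2+\sqrt2)=2+\sqrt2+2\sqrt2+2=4+3\sqrt2$, this reads $q\le \sqrt2/(4+3\sqrt2)$, and rationalizing gives $\sqrt2(4-3\sqrt2)/((4)^2-(3\sqrt2)^2)=\sqrt2(4-3\sqrt2)/(16-18)=\sqrt2(4-3\sqrt2)/(-2)=(3\sqrt2-4)\sqrt2/2=(6-4\sqrt2)/2=3-2\sqrt2$. So the threshold $3-2\sqrt2$ in~\eqref{condition-unique} is exactly what makes $\overline{B_{\sqrt2}(I)}$ invariant under $F$; this is the computation that pins down the constant and I would present it carefully. (One should double-check whether the intended majorant is $(1+r)(2+r)$ or $(1+r)^2+(1+r)$ — these coincide — and that the starting point $\Z^{(0)}=I$ lies in the ball, which is trivial.)

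Third, the contraction step: estimate $\|F(\Z_1)-F(\Z_2)\|$ for $\Z_1,\Z_2\in\overline{B_{\sqrt2}(I)}$. Since $F-I$ is quadratic, $F(\Z_1)-F(\Z_2)=G\circ\big((\Z_1-\Z_2)\mathcal D(\Delta\Z_1)+\Z_2\mathcal D(\Delta(\Z_1-\Z_2))-\Delta(\Z_1-\Z_2)\big)$ by bilinearity of $\Z\mapsto \Z\mathcal D(\Delta\Z)$ written as a symmetric-type expansion. Bounding each term gives a Lipschitz constant of the form $L=q\big(\|\Z_1\|+\|\Z_2\|+1\big)\le q(2(1+\sqrt2)+1)=q(3+2\sqrt2)$. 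Then $L<1\iff q<1/(3+2\sqrt2)=3-2\sqrt2$, again exactly the stated condition. So under~\eqref{condition-unique}, $F$ is a contraction of the complete metric space $\overline{B_{\sqrt2}(I)}$, hence by the Banach fixed point theorem it has a unique fixed point $\Z^\ast$ there, and the iterates from $\Z^{(0)}=I$ converge to it geometrically. Asymptotic stability follows since the derivative $DF$ at $\Z^\ast$ has operator norm $\le L<1$.

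The main obstacle I anticipate is getting the Hadamard-product norm bound $\|G\circ X\|\le\|G\|\|X\|$ to hold with the right constant: in general the Schur/Hadamard multiplier norm of $G$ can exceed $\|G\|$, so I would need either (a) to note that in this application $G$ acts as $G\circ X$ with $G$ having zero diagonal and a special ``gaps'' structure for which the spectral-norm bound does hold, or (b) to invoke the known fact that $\|G\circ X\|\le \|G\|_{\text{something}}\|X\|$ and absorb the discrepancy, possibly adjusting the radius $\sqrt2$ accordingly — but since the paper asserts the clean constant $3-2\sqrt2$, the intended reading is surely $\|G\circ X\|\le\|G\|\|X\|$, which I would justify via the representation of Hadamard product as compression of a tensor product (so that $\|G\circ X\|\le\|G\|\|X\|$ indeed holds because the Hadamard product is a submatrix of $G\otimes X$ after a suitable unitary conjugation; more precisely one uses $\|A\circ B\|\le \|A\|_r\|B\|_c$ but here the relevant factor is exactly the spectral norm). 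Once that lemma is in hand, the rest is the bookkeeping sketched above, and the two inequalities (invariance and contraction) both collapse to the single numerical condition $q\le 3-2\sqrt2$, which explains why this particular constant appears.
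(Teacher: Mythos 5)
Your proposal is correct and follows essentially the same route as the paper: a Banach fixed-point argument on the closed ball $B_{\sqrt{2}}(I)$, with the invariance bound $\|F(\Z)-I\|\le q(1+r)(2+r)$ and the bilinear-expansion Lipschitz bound both collapsing to $q<3-2\sqrt{2}$ at $r=\sqrt{2}$, exactly as in the paper (which simply optimizes over $r$ to motivate the choice $\sqrt{2}$). Your worry about the Hadamard factor is unnecessary: the spectral norm is genuinely sub-multiplicative under the Hadamard product (the tensor-product compression argument you sketch, as cited in the paper to Horn--Johnson), so the clean bound $\|G\circ X\|\le\|G\|\,\|X\|$ holds and no adjustment of the radius is needed.
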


\begin{proof}

The proof uses the Banach fixed-point theorem based on the $\|\cdot\|$ norm, which is sub-multiplicative with respect to both the matrix and Hadamard products~\cite{johnson2012}.

First, the estimate

\[\Vert F(\Z)-I\Vert\leq \Vert G\Vert \Vert\Delta\Vert (\Vert \Z\Vert+\Vert \Z\Vert^2)\]
implies that \(F\) maps a closed ball \(B_r(I)\) of radius \(r\) centered on \(I\) onto itself whenever \(\Vert G\Vert\Vert\Delta\Vert\leq r/[(1+r)(2+r)]\).  Next, from {\cref{dyn}} we have the estimate

\[
\Vert F(\Z)-F(\Z')\Vert\leq \Vert G\Vert \Vert\Delta\Vert\,(1+\Vert \Z+  \Z'\Vert)\, \Vert \Z -  \Z'\Vert.
\]
Hence, \(F\) is contracting
in \(B_r(I)\) provided \(\Vert G\Vert\Vert\Delta\Vert < 1/[1+2(1+r)]\). When both conditions on $\|G\|\|\Delta\|$ hold the Banach fixed-point theorem implies
that \(\Z^{(k)}=F^k(I)\) converges exponentially to a unique fixed point $\Z^*$
within $B_r(I)$ as \(k\to\infty\). Choosing the optimal
radius
$$\underset{r>0}{\textrm{argmax}}\min\,\left(\frac{r}{(1+r)(2+r)},\frac{1}{1+2(1+r)}\right)=\sqrt{2},$$
we see that \(\|G\|\Vert\Delta\Vert<3-2\sqrt{2}\approx 0.17\) guarantees convergence to the fixed point $\Z^*$.
\end{proof}

The set of solutions of the equation $\Z = F(\Z)$ contains matrices composed of all combinations of eigenvectors of $M$ with the appropriate normalization (the $i$-th coordinate of the column at the $i$-th position is set to 1). Some solutions may contain several repeated eigenvectors. Such solutions are rank-deficient, viz. $\operatorname{rank} \Z < n$. In principle, there is a danger that the iterative algorithms converges to one such solution with a loss of information about the eigenvectors of $M$ as a result.
The following theorem guarantees that under condition (\ref{condition-unique}) this does not happen.

\begin{theorem}
\label{theorem-rank}
Let condition (\ref{condition-unique}) hold for a partition of matrix $M$ and $\Z^*$ be the unique fixed point of $F$ in $B_{\sqrt{2}}(I)$. Then $\Z^*$ has full rank.
\end{theorem}

\begin{proof}
The proof uses additional lemmas and one theorem provided in \cref{appendix:full_rank}.

A square matrix is rank-deficient if either two of its columns are collinear or more than two of its columns are linearly dependent but not pairwise collinear. For $\Z^*$ the latter is only possible if the corresponding eigenvectors belong to an eigenspace of $M$ of dimension higher than one, which is ruled out by Lemma~\ref{lemma-eigenspace}. By Lemma~\ref{lemma-defective}, $\Z^*$ does not contain any defective eigenvectors of $M$. Then the rank can be lost only by a repetition of an eigenvector (with renormalization) corresponding to an eigenvalue of multiplicity one.

Embed $M$ into the family $M_\parameter = D + \parameter \Delta$ with $\parameter \in [0,1]$, so that $M_1 = M$ and $M_0 = D$. Consider a partition for each member of the family $M_\parameter = D_\parameter + \Delta_\parameter$ such that $D_\parameter = D$ and $\Delta_\parameter = \parameter\Delta$. Let $G_\parameter = G$ be the inverse gaps matrix of $D_\parameter$ and $F_\parameter$ be the mapping defined by $G_\parameter$ and $\Delta_\parameter$ according to (\ref{dyn}) with a substitution of $G$ by $G_\parameter$ and $\Delta$ by $\Delta_\parameter$. It follows that $\|G_\parameter\|\|\Delta_\parameter\| < 3 - 2\sqrt{2}$ holds for all $\parameter$. Thus, according to Theorem~\ref{main-th}, each $F_\parameter$ has a unique fixed point $\Z^*_\parameter$ in $B_{\sqrt{2}}(I)$. Furthermore, by Lemmas~\ref{lemma-eigenspace} and \ref{lemma-defective} we can be sure that none of the columns of $\Z^*_\parameter$ for each value of $\parameter$ corresponds either to an eigenvector with algebraic multiplicity higher than one or lays in an eigenspace of $M_\parameter$ with dimension higher than one. Thus they all correspond to eigenvalues that are simple roots of the characteristic polynomial of $M_\parameter$. Then all projective points that induce columns of $\Z^*_\parameter$ are differentiable functions of $\parameter$ by Theorem~\ref{theorem-continuous}. It means that if some two columns of $\Z^* = \Z^*_1$ are collinear (and thus correspond to the same projective point), the same columns stay collinear in $\Z^*_\parameter$ for all $\parameter$. But none of the columns of $I = \Z^*_0$ are collinear. We conclude that $\Z^*$ cannot have collinear columns.
\end{proof}

\begin{corollary}
If $M$ has eigenvalues with multiplicity higher than one, then for any partition $\tilde M = \tilde D + \tilde\Delta$ of any matrix $\tilde M$ similar to $M$, where $\tilde D$ is diagonal such that the corresponding inverse gaps matrix $\tilde G$ is defined, the relation $\|\tilde G\| \|\tilde\Delta\| \geq 3 - 2\sqrt{2}$ holds.
\end{corollary}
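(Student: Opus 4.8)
The plan is to prove the contrapositive: I will show that if some partition $\tilde M = \tilde D + \tilde\Delta$ of a matrix $\tilde M$ similar to $M$ satisfies $\|\tilde G\|\,\|\tilde\Delta\| < 3 - 2\sqrt 2$, then $\tilde M$ has $n$ pairwise distinct eigenvalues, which is impossible when $M$ — and hence $\tilde M$, since similar matrices share their characteristic polynomial together with algebraic multiplicities — has an eigenvalue of multiplicity greater than one. So the first step is just this observation about similarity, which reduces everything to: \emph{condition \cref{condition-unique} forces the spectrum of $\tilde M$ to be simple.}

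Next I would assume $\|\tilde G\|\,\|\tilde\Delta\| < 3 - 2\sqrt 2$ and let $\tilde F$ be the map \cref{dyn} built from $\tilde G$ and $\tilde\Delta$. By \cref{main-th}, the iteration of $\tilde F$ from $I$ converges to a unique fixed point $\tilde\Z^{*}$ in $B_{\sqrt 2}(I)$, and by \cref{theorem-rank} this $\tilde\Z^{*}$ has full rank. Recalling (as in the discussion preceding \cref{theorem-rank}) that every column of a fixed point of $\tilde F$ is a normalized eigenvector of $\tilde M$, the $n$ columns of $\tilde\Z^{*}$ are $n$ linearly independent eigenvectors of $\tilde M$.

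The remaining step is to upgrade ``full rank'' to ``$n$ distinct eigenvalues'', and for this I would reuse the structural facts that go into the proof of \cref{theorem-rank}: by \cref{lemma-eigenspace} no eigenspace of $\tilde M$ has dimension larger than one, and by \cref{lemma-defective} none of the columns of $\tilde\Z^{*}$ is a defective eigenvector, so each column of $\tilde\Z^{*}$ is an eigenvector for an eigenvalue that is a \emph{simple} root of the characteristic polynomial of $\tilde M$. Since a simple eigenvalue has a one-dimensional eigenspace, two columns of $\tilde\Z^{*}$ attached to the same eigenvalue would be collinear, contradicting linear independence; hence the $n$ columns are attached to $n$ pairwise distinct eigenvalues. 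Therefore $\tilde M$, and with it $M$, has $n$ distinct eigenvalues, contradicting the hypothesis on $M$, which completes the contrapositive.

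I expect the only delicate point to be precisely this last upgrade: a full-rank matrix of eigenvectors only certifies that $\tilde M$ is diagonalizable, not that its spectrum is simple (the identity matrix has a full eigenbasis yet a single eigenvalue of multiplicity $n$), so one genuinely has to feed in that each column of $\tilde\Z^{*}$ corresponds to a simple eigenvalue — which is exactly what \cref{lemma-eigenspace} and \cref{lemma-defective} supply. If one preferred an argument that does not route through the appendix, an alternative is to combine $\|\tilde G\| \ge \max_{i\neq j}|\tilde D_{ii}-\tilde D_{jj}|^{-1}$ (the spectral norm dominates every matrix entry) with a Bauer--Fike estimate for the normal matrix $\tilde D$: the hypothesis then forces $\|\tilde\Delta\| < \frac{1}{2}\min_{i\neq j}|\tilde D_{ii}-\tilde D_{jj}|$, so along $\tilde D + t\tilde\Delta$, $t\in[0,1]$, the eigenvalues never leave the pairwise disjoint disks of radius $\frac{1}{2}\min_{i\neq j}|\tilde D_{ii}-\tilde D_{jj}|$ around the $\tilde D_{ii}$; by continuity of the spectrum exactly one eigenvalue lies in each disk at $t=1$, again giving $n$ distinct eigenvalues of $\tilde M$.
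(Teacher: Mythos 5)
Your contrapositive argument is correct and is essentially the route the paper intends: the corollary is an immediate consequence of \cref{main-th} and \cref{theorem-rank}, whose proof (via \cref{lemma-eigenspace} and \cref{lemma-defective}) supplies exactly the upgrade you identify from ``full rank'' to ``each column belongs to a simple eigenvalue,'' so the $n$ non-collinear columns force $n$ distinct eigenvalues, contradicting the multiplicity assumption preserved under similarity. Your alternative Bauer--Fike/continuation argument is also sound (and in fact shows the weaker condition $\|\tilde G\|\,\|\tilde\Delta\|<1/2$ already forces a simple spectrum), but it is a side remark rather than the paper's mechanism.
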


The following subsections are dedicated to aspects of IPT related to quantum-mechanical perturbation theory (sec. \ref{IPT-RS}) and dynamical systems theory (sec. \ref{2d-example}). They may be omitted by the non-specialist reader.

\begin{figure*}
    \includegraphics[width=\textwidth]{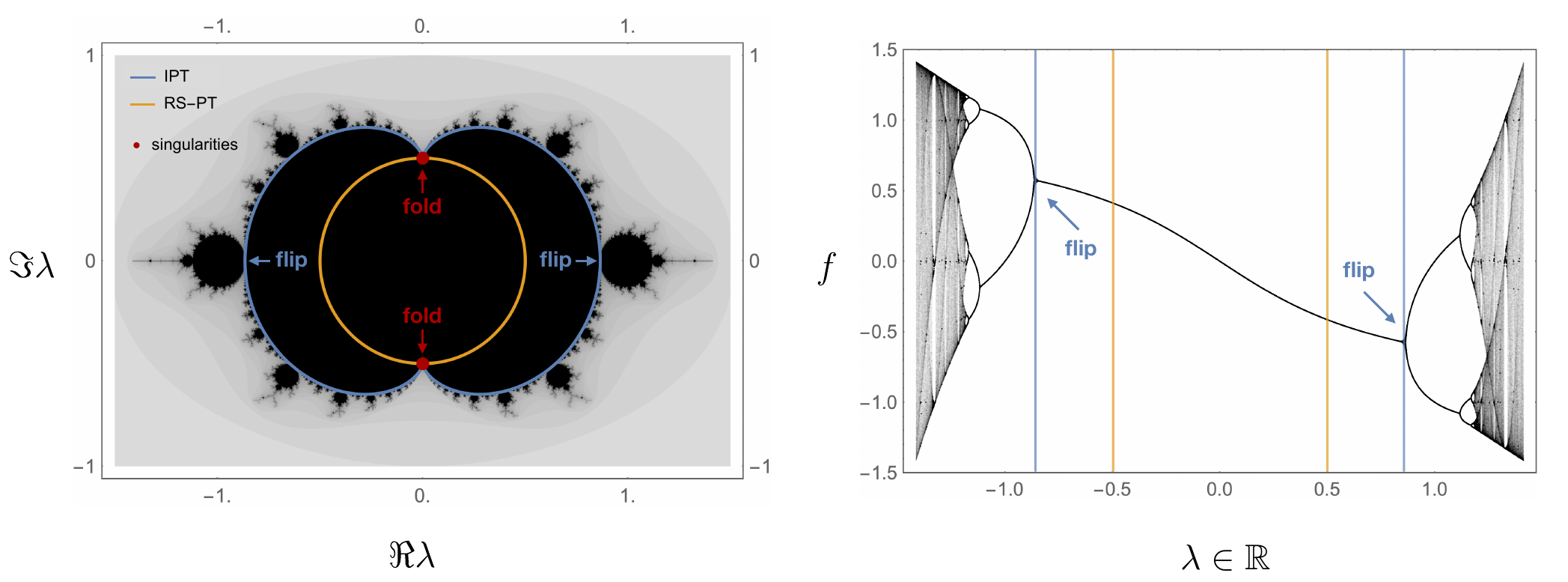}
    \caption{Convergence of IPT in the two-dimensional
    example \cref{2by2}. Left: In the
    complex $\parameter$-plane, RS perturbation theory (RS-PT)
    converges inside a circle of radius $1/2$ (orange line)
    bounded by the exceptional points $\pm i/2$ where eigenvalues
    have branch-point singularities and $M$ is not
    diagonalizable. Dynamical perturbation theory (IPT) converges inside
    the domain bounded by the blue cardioid, which is larger---especially along the
    real axis, where there is no singularity. Outside this domain, the map
    can converge to a periodic cycle, be chaotic or diverge to infinity, following flip bifurcations (along the real axis)
    and fold bifurcations (at the singularities of the  blue curve). The domain where the map
    remains bounded (black area) is a conformal transformation of the
    Mandelbrot set. Right: The bifurcation diagram for the quadratic
    map $f$ along the real $\parameter$-axis
    illustrates the period-doubling route to chaos as $\parameter$
    increases away from $0$ (in absolute value). Orange and
    left vertical lines indicate the boundary of the convergence domains of
    RS-PT and IPT respectively. }
    \label{2d}
    \end{figure*}

\subsection{Contrast with Rayleigh-Schr\"odinger perturbation theory}\label{IPT-RS}

It is interesting to constrast the present iterative method with conventional Rayleigh-Schr\"odinger (RS)
perturbation theory, where the eigenvectors of a parametric matrix $M = D + \parameter \Delta$ are
sought as power series 
in $\parameter$, \emph{viz.} $\mathbf z=\sum_{\ell}\parameter^\ell \mathbf z^{[\ell]}$, where vector-terms (so called corrections) $\mathbf z^{[\ell]}$ do not depend on $\parameter$. Provided that $D$ has distinct diagonal entries, it is indeed possible to express the matrix of eigenvectors $\Z$ (with the same normalization as in Section~\ref{fixed-points} and with the order of eigenvectors such that $\Z \to I$ as $\parameter \to 0$) as a power series $\Z = \sum_\ell \parameter^\ell \Z^{[\ell]}$, which converges in some disk around $\parameter = 0$ \cite{kato1995}. Then the order-$k$ approximation of $\Z$ takes the form
$\Z^{(k)}_{\textrm{RS}}=\sum_{\ell=0}^k \parameter^\ell \Z^{[\ell]}$. The matrix
corrections $\Z^{[\ell]}$ are obtained from $\Z^{[0]}=I$
via the recursion
(\cref{appendix:RSPT})

\begin{equation}\label{ARS-alpha} 
\Z^{[\ell]}=G\circ\left(\sum_{s=0}^{\ell-1} \Z^{[\ell-1-s]}\,\mathcal{D}(\Delta \Z^{[s]}) -\Delta \Z^{[\ell-1]}\right).
\end{equation}
The iterative
scheme $ \Z^{(k)}$ completely contains this RS series in the
sense that $\Z^{(k)}=\Z^{(k)}_{\textrm{RS}}+\mathcal{O}(\parameter^{k+1})$; this can be seen by induction (\cref{appendix:IPT_RSPT}). In other words, we can recover the usual perturbative expansion
of $\Z$ to order $k$ by
iterating $k$ times the map $F$ and
dropping all terms $\mathcal{O}(\parameter^{k+1})$. Moreover, the parameter whose
smallness determines the convergence of the RS series is the product of
the perturbation magnitude $|\parameter|$ with the inverse diagonal
gaps $\|G\|$ \cite{kato1995}, just as it determines the
contraction property of $F$.

But IPT also differs from
the RS series in two key ways. First, the complexity of each iteration
is constant (essentially just one matrix product with $\Delta$), whereas
computing the RS corrections $\Z^{[\ell]}$ involves the sum of
increasingly many matrix products. Second, not being defined as a power
series, the convergence of $ \Z^{(k)}$ when $k\to\infty$
is not \emph{a priori } restricted to a disk in the
complex $\parameter$-plane. Together, these two differences
suggest that IPT has the potential to converge faster,
and in a larger domain, than RS perturbation theory. This is what we now
examine, starting from an elementary but explicit example.

\subsection{An explicit $2\times 2$ example}\label{2d-example}

To build intuition, let us consider the parametric matrix 
\begin{equation}
\label{2by2} 
M =
\begin{pmatrix}
   0 & \parameter \\
   \parameter & 1  
\end{pmatrix}
.
\end{equation}
This matrix has eigenvalues $\eigval_{\pm}=(1\pm\sqrt{1+4\parameter^2})/2$, both of which are analytic inside the disk $\vert\parameter\vert<1/2$ but have branch-point singularities at $\parameter=\pm i/2$. (These singularities are exceptional points, \textit{i.e.} $M$ is not diagonalizable for these values.) Because the RS series is a power series, these imaginary points contaminate its convergence also on the real axis, where no singularity exists: $\Z_{\textrm{RS}}$ diverges for any value of $\parameter$ outside the disk of radius $1/2$, and in particular for real $\parameter>1/2$ .

Considering instead our iterative scheme, one easily computes $$\Z^{(k)}=
\begin{pmatrix}
    1 & -f^k(0) \\
    f^k(0) & 1
    \end{pmatrix}
,$$
where $f(x)=\parameter(x^2-1)$ and the superscripts indicate $k$-fold iterates. This one-dimensional map has two fixed points at $x^*_{\pm}=\eigval_{\pm}/\parameter$. Of these two fixed points $x^*_+$ is always unstable, while $x^*_-$ is stable for $\parameter \in(-\sqrt{3}/2, \sqrt{3}/2)$ and loses its stability at $\parameter=\pm\sqrt{3}/2$ in a flip bifurcation. At yet larger values of $\parameter$, the iterated map $f^k$---hence the fixed-point iterations $ \Z^{(k)}$---follows the period-doubling route to chaos familiar from the logistic map \cite{May_1976}. For values of $\parameter$ along the imaginary axis, we find that the map is stable if $\Im \parameter\in(-1/2,1/2)$ and loses stability in a fold bifurcation at the exceptional points $\parameter=\pm i/2$. The full domain of convergence of the system is strictly larger than the RS disk, as shown in \cref{2d}. We also observe that the disk where both schemes converge, the dynamical scheme does so with a better rate than RS perturbation theory: we check that $\vert f^k(0)-x^*_-\vert\sim\vert1-\sqrt{1+4\parameter^2}\vert^k=\mathcal{O}(\vert 2\parameter^2\vert^k)$, while the remainder of the RS decays as $\mathcal{O}(\vert 2\parameter\vert^k)$. This is a third way in which the dynamical scheme outperforms the usual RS series, at least in this case: not only is each iteration computationally cheaper, but the number of iterations required to reach a given precision is lower.

Although it is possible to analytically compute the convergence domain of our iterative scheme in this simple case, in practice this task is prohibitive even for very small matrices, as it can be seen on a slightly more complicated $3\times 3$ example. This example along with the computation of the convergence domain for the $2\times 2$ case can be found in \cref{further-examples}.

\section{Acceleration}\label{acceleration}

As a fixed-point problem, IPT is directly amenable to the broad set of fixed-point acceleration methods familiar from numerical analysis \cite{eyert1996comparative}. Generally speaking, acceleration techniques aim to reduce the number of iterations required for convergence by combining several iterates of the function in each update. 

We experimented with the well-known Anderson acceleration \cite{Walker_2011} and found that, indeed, IPT can be made to converge faster and in a larger domain that with simple Picard iteration. Here we use the more recent (but simpler) technique called Alternating Cyclic Extrapolation (ACX) \cite{lepage2021alternating}. This method consists in replacing line 4,5 in \cref{single_line_algo} (or similarly in \cref{full_spectrum_algo}) by \cref{ACX}. As will see below, IPT with ACX acceleration (IPT-ACX) is a very efficient approach to near-diagonal eigenvalue problems.

\begin{algorithm}[t]
    \caption{Alternating Cyclic Extrapolation \cite{lepage2021alternating}, with vector of orders $o = (3, 2, 3, 2, \cdots)$}\label{ACX}
    \begin{algorithmic}[1]
        \STATE{$i = 1$}
        \WHILE{$\vert\mathbf z - \mathbf{f}_i(\mathbf z)\vert/\vert\mathbf z\vert >\eta$ for some tolerance $\eta$}
        \STATE{$p \leftarrow o_k$}
        \STATE{$\Delta^0 \leftarrow \mathbf z$}
        \STATE{$\Delta^1 \leftarrow \mathbf{f}_i(\mathbf z) - \mathbf z$}   
        \STATE{$\Delta^2 \leftarrow \mathbf{f}_i^2(\mathbf z) - 2 \mathbf{f}_i(\mathbf z) + \mathbf z$}
        \IF{$p = 2$}
        \STATE{$\sigma \leftarrow \vert\langle\Delta^2, \Delta^1\rangle \vert/\Vert \Delta^2\Vert$}
        \STATE{$\mathbf z \leftarrow 2\sigma \Delta^1 + \sigma^2 \Delta^2$}
        \ELSIF{$p = 3$}
        \STATE{$\Delta^3 \leftarrow \mathbf{f}_i^3(\mathbf z) - 3\mathbf{f}_i^2(\mathbf z) + 3 \mathbf{f}_i(\mathbf z) - \mathbf z$}
        \STATE{$\sigma \leftarrow \vert\langle\Delta^3, \Delta^2\rangle \vert/\Vert \Delta^3\Vert$}
        \STATE{$\mathbf z \leftarrow 3\sigma \Delta^1 + 3\sigma^2 \Delta^2 + \sigma^3\Delta^3$}
        \ENDIF
        \ENDWHILE
    \end{algorithmic}
    \end{algorithm}

\section{Performance}\label{performance}

We investigated the performance of IPT with respect to classical algorithms using several test matrices. Our system consists of $2\times 64$ CPU cores (AMD Epyc 7702 at 2.0 GHz) and one A100 NVidia GPU.

\subsection{Few eigenvalues} To compute a small subset of eigenvalues of a near-diagonal matrix, the reference algorithms are preconditioned iterative methods, with $D^{-1}$ as preconditioner. (Krylov methods are much slower for these problems.) We compared IPT to the symmetric eigensolver PRIMME (PReconditioned Iterative MultiMethod Eigensolver) \cite{stathopoulos2010primme}, a package containing various implementations of Rayleigh Quotient Iteration (RQI), Generalized Davidson (GD, GD$+k$), Jacobi-Davidson (JDQR, JDQMR), and Locally Optimal Block Preconditioned Conjugate Gradient (LOBPCG).\footnote{We also tested SLEPc \cite{hernandez2005slepc} but, on our single-node system, PRIMME was faster.} PRIMME also includes a "dynamic" mode with shifts dynamically between GD$+k$ and JDQMR in an attempt to minimize the number of matrix-vector multiplications. 

We remind the reader that GD, JD and RQI all consist in expanding a small search subspace, orthonormalizing that subspace, and computing the eigendecomposition of $M$ in that subspace. Where they differ is how the subspace is expanded. GD uses the current residual vector $\mathbf r = \mathbf{f}(\mathbf z) - \mathbf z$ after preconditioning with $D^{-1}$. JD and RQI, by contrast, solve a linear ``correction'' equation (approximately or exactly, respectively) and expand the subspace with the correction vector; this inner iteration involves additional matrix-vector products per step compared to GD. We refer the reader to the documentation of PRIMME for implementation details. 

For the implementation of IPT and ACX we used the Julia programming language \cite{Julia-2017}. The code is freely available at \cite{IPT-code} (for IPT) and \cite{paper-code} (for the timings and figures). As already noted, when using PRIMME we set $D^{-1}$ as preconditioner. Other than this, we used the default settings for all methods.   

Our first example is from a real-world chemistry application. Full-configuration interaction (FCI) consists in computing the ground state (lowest-lying eigenvector) of the Hamiltonian operator of a molecule in a finite basis set \cite{ostlund1996}. Because  molecular orbitals can be computed approximately using self-consisted field approximations, the corresponding (symmetric) Hamiltonian matrix is near-diagonal, and suitable eigensolvers must make use of this information. (Indeed, Davidson's diagonally-preconditioned algorithm \cite{Davidson_1975} was introduced for this purpose.) 

Please note that, unlike standard iterative eigensolvers, IPT does not naturally target extremal eigenvalues; instead it computes perturbations the diagonal elements of $M$. In particular, to compute the lowest eigenvalue of a FCI matrix, we simply choose apply \cref{single_line_algo} for $i$ such that $M_{ii} = \min \textrm{diag}(M)$. 

We computed the ground state of the FCI matrix for water ($\textrm{H}_2$O) in the minimal basis set ``sto-3g" (with $n = 441$ in this example). (To compute its elements we used the quantum chemistry package PySCF \cite{PYSCF}.) \cref{water} (left) shows the convergence history (residual norm $\Vert \mathbf{f}(\mathbf{z}) - \mathbf{z}\Vert$ vs. number of matrix-vector products, matvecs) for various algorithms in PRIMME and for IPT(-ACX). In this example, both IPT and IPT-ACX converge to the desired tolerance (here $\eta = 10^{-12}$) with fewer matvecs that any of the PRIMME methods, with the latter about twice faster than the former. \cref{water} (right) shows the corresponding timings on our multi-core machine. 

\begin{figure}[t]
    \centering
    \includegraphics[width=.49\textwidth]{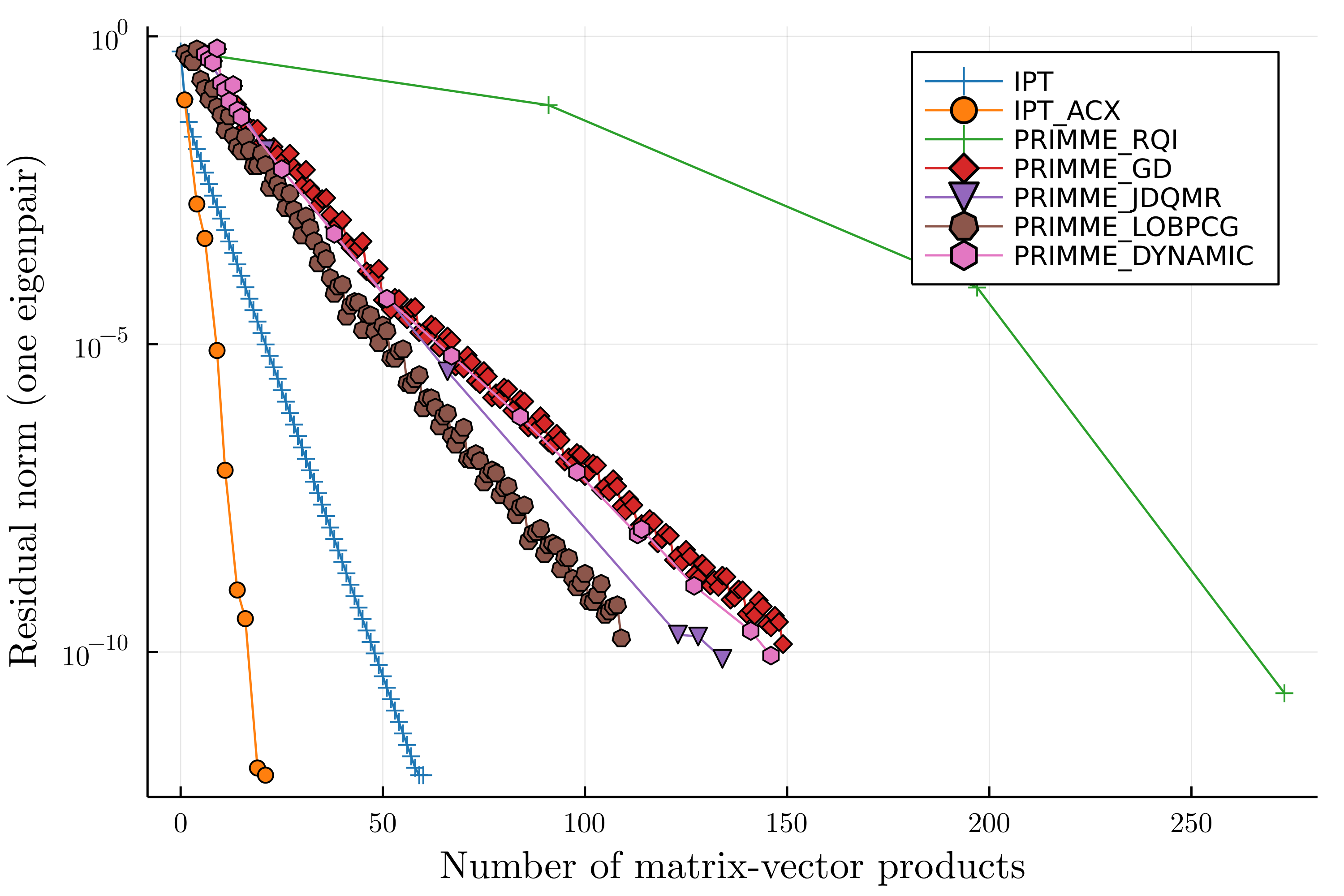}
    \hfill
    \includegraphics[width=.49\textwidth]{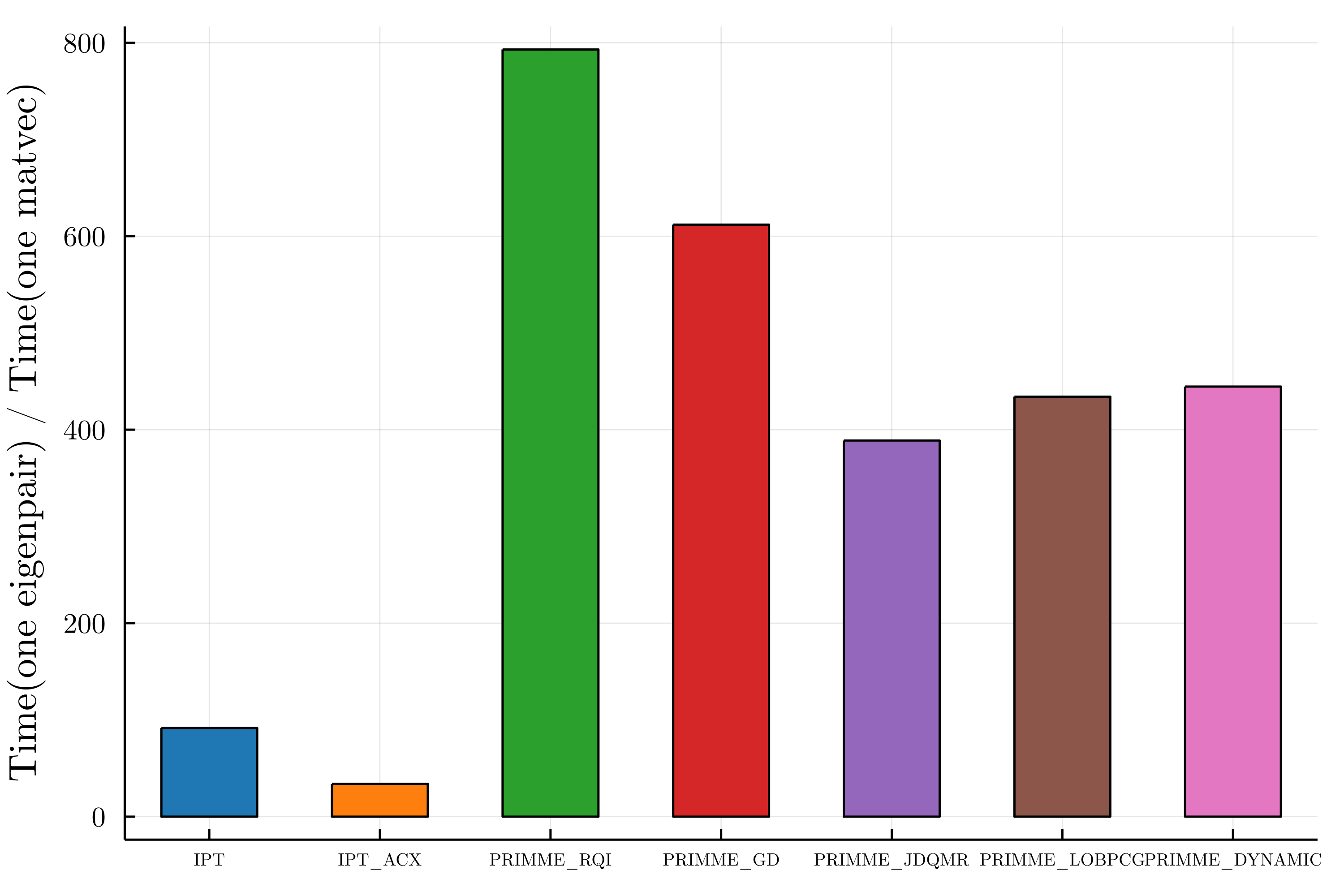}

    \caption{
        IPT vs. PRIMME preconditioned eigensolvers for the FCI computation of the ground state of the molecular Hamiltonian of $\textrm{H}_2$O ($n = 441$) in the minimal basis set ``sto-3g".  Left: convergence history, i.e. norm of the residual vector $\mathbf r = \mathbf{f}(\mathbf{z}) - \mathbf{z}$ vs. matvecs, showing best performance for IPT and IPT-ACX. Right: the corresponding timings on our multi-core CPU. 
    }
    \label{water}
\end{figure}

We performed the same calculations with a near-diagonal matrix considered by Morgan in his comparison of preconditioned eigensolvers \cite{morgan2000preconditioning}: $M$ is the tri-diagonal, symmetric matrix with $M_{ii} = i$ and $M_{i, i+1} = M_{i, i -1} = 0.5$. \cref{morgan} shows the results. Here IPT requires many more matvecs than Davidson methods, but IPT-ACX does not, converging equally fast as the fastest PRIMME method for this example, LOBPCG. But since LOBPCG involves additional steps per iteration (diagonalization in the subspace), IPT-ACX proves faster overall (\cref{morgan}, right panel).

\begin{figure}[t]
    \centering
    \includegraphics[width=.49\textwidth]{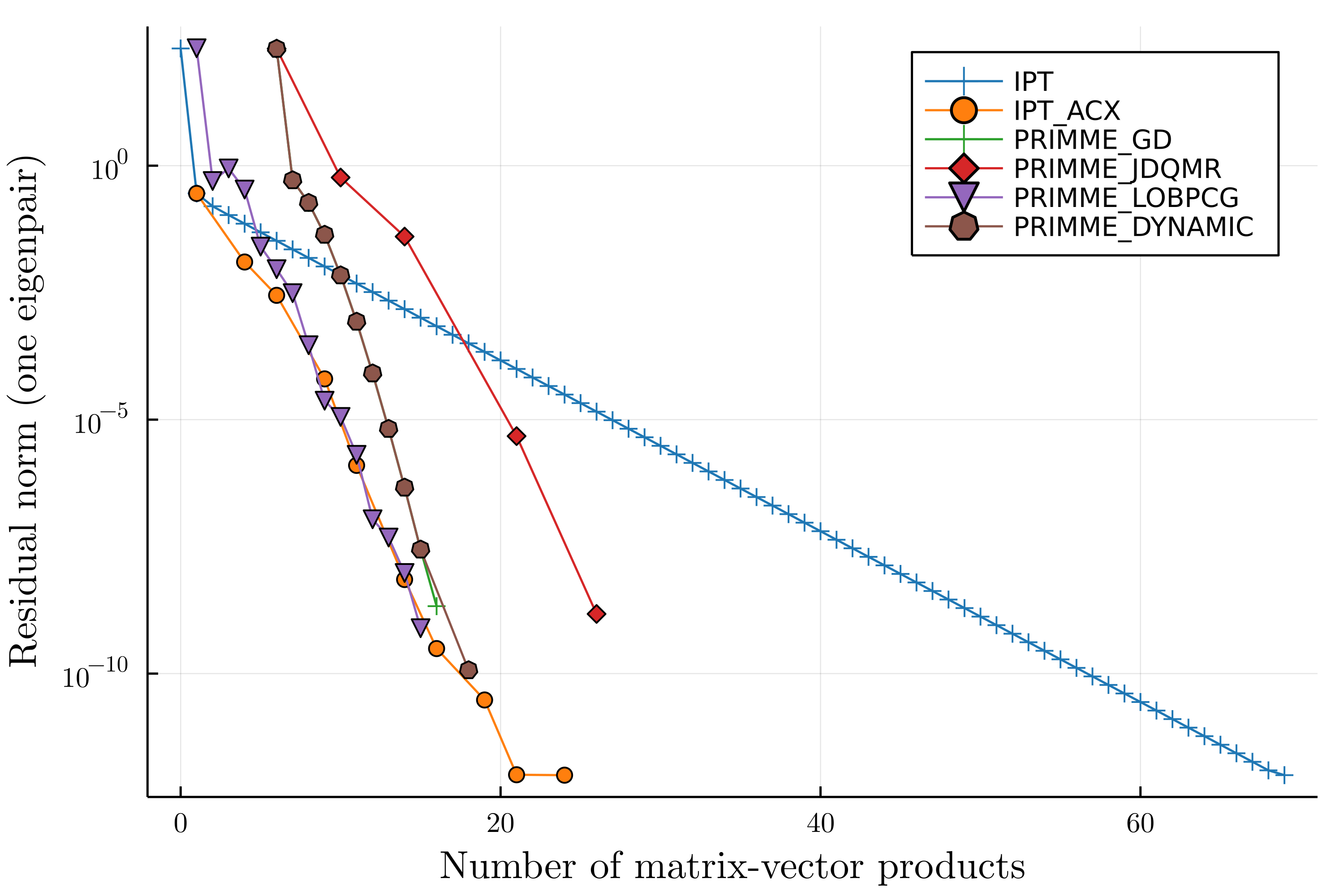}
    \hfill
    \includegraphics[width=.49\textwidth]{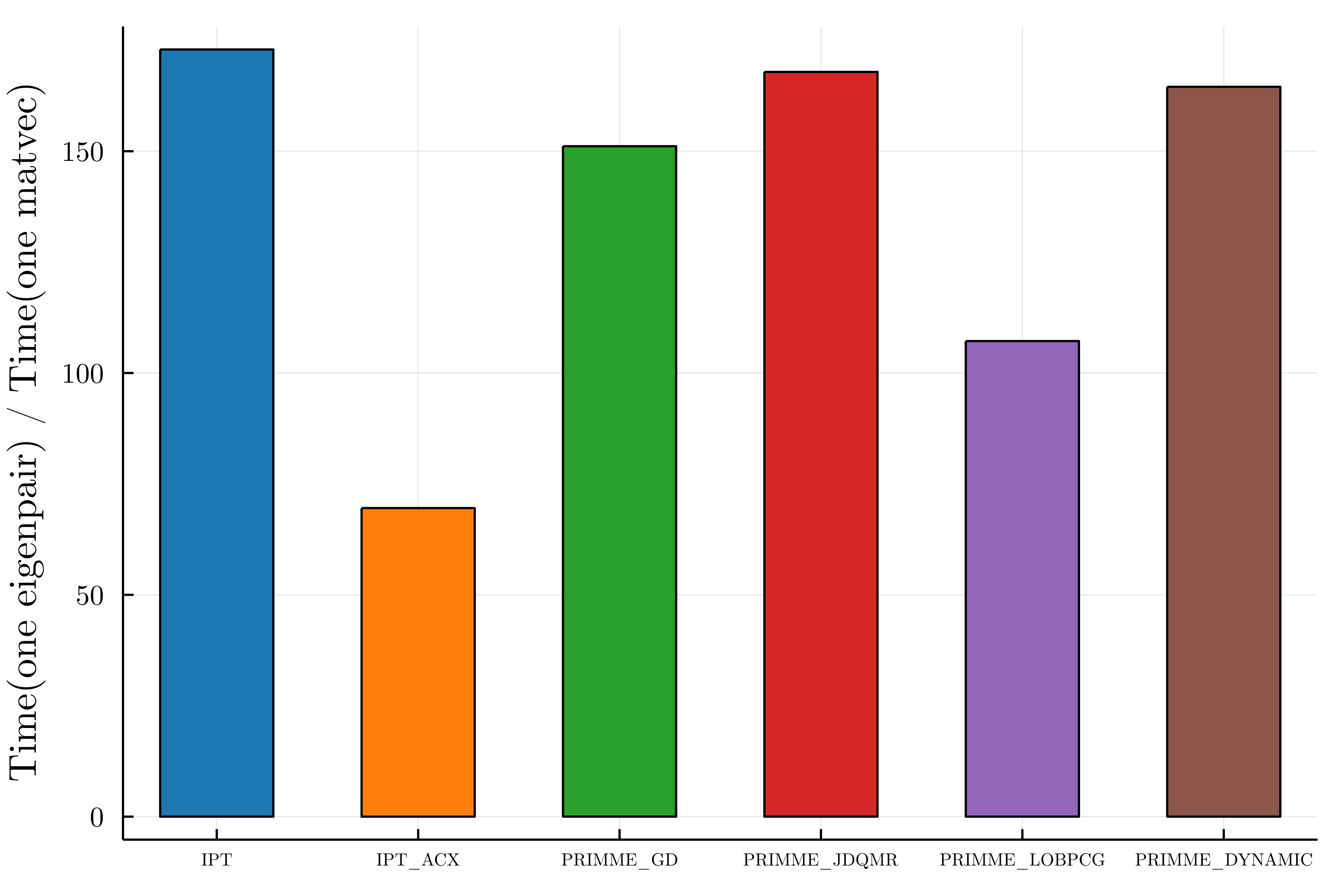}

    \caption{
        IPT vs. PRIMME preconditioned eigensolvers for the FCI computation of the ground state of the $5000\times 5000$ tri-diagonal, symmetric matrix considered in \cite{morgan2000preconditioning}.  Left: convergence history, i.e. norm of the residual vector $r = \mathbf{f}(\mathbf{z}) - \mathbf{z}$ vs. matvecs, showing that IPT-ACX requires as few matvecs as the best PRIMME eigensolver (in this case LOBPCG). Right: the corresponding timings on our multi-core CPU. 
    }
    \label{morgan}
\end{figure}

\begin{figure}[t]
    \centering
    \includegraphics[width=.9\textwidth]{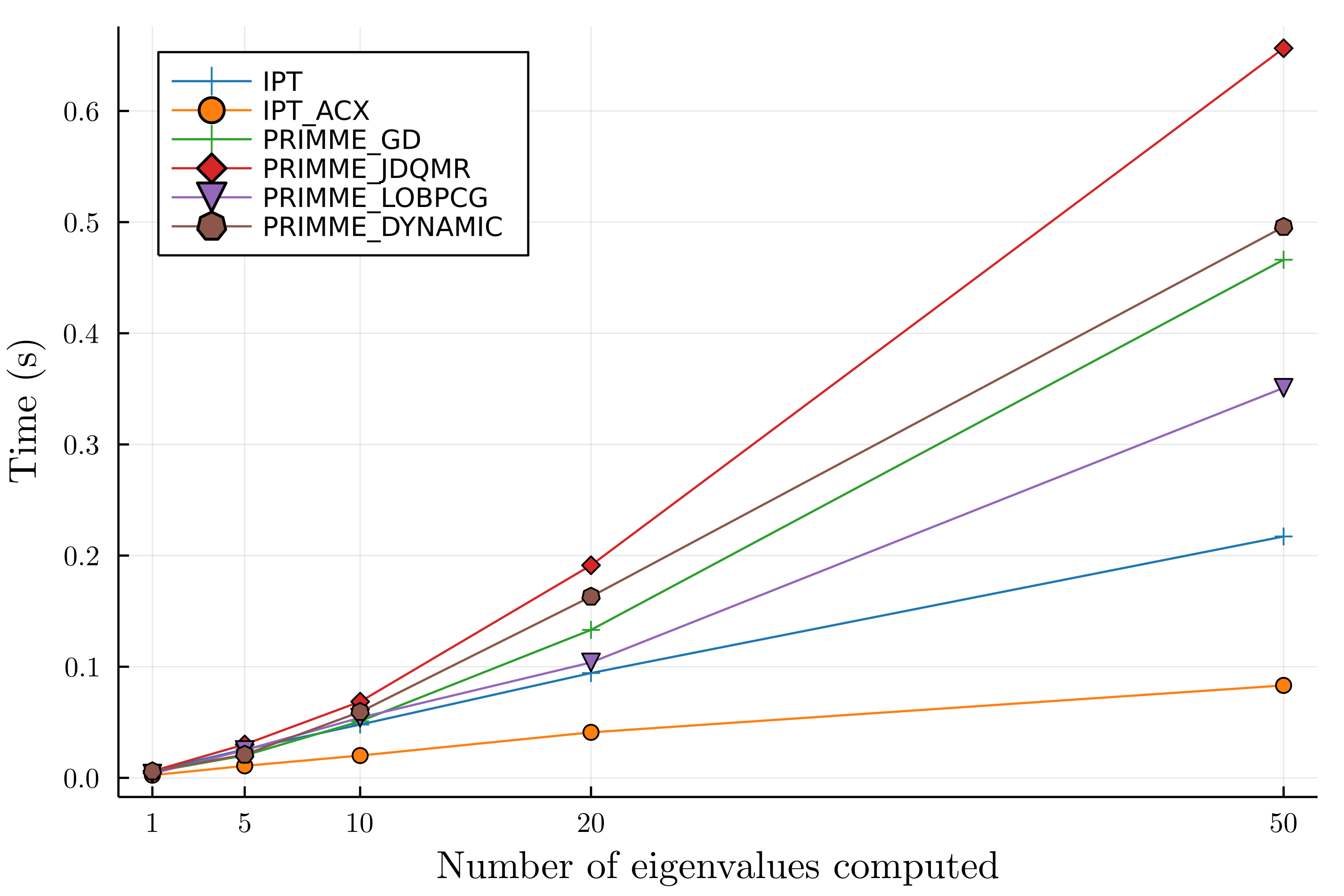}
    \caption{
        Time to compute the $k$ lowest eigenvalues of the tridiagonal Morgan matrix using IPT(-ACX) and PRIMME iterative eigensolvers. Thanks to IPT's perfect parallelism, the larger $k$, the larger its advantage over other methods which target extremal eigenvalues and progress inwards by deflation.  
    }
    \label{num_ev}
\end{figure}

But the advantage of IPT vis-a-vis standard iterative methods becomes more apparent when we request several eigenpairs rather than just one. Unlike other methods, IPT computes eigenpairs in parallel (corresponding to the columns of $Z$) rather than sequentially, after deflation. The benefit of this parallelism is on display in \cref{num_ev}, where the timing to compute $k$ eigenvalues is compared across algorithms. For $k=50$, IPT-ACX is already several times faster than the fastest iterative method, and this gap only increases with $k$.

Finally we show in \cref{fail} how IPT fails when off-diagonal elements become too large compared to diagonal gaps. For this we consider a modification of the Morgan matrix with $M_{ii} = i$ and $M_{i, i+1} = M_{i, i -1} = \varepsilon$ and increase $\varepsilon$.

\subsection{Full spectrum} 

Next we consider the problem of computing all eigenpairs of a large, dense matrix of the form 
\begin{equation}\label{dense_matrix}
    M = \textrm{diag}(i)_{1 \leq i\leq n} + \varepsilon R_n
\end{equation}
where $R_n$ is a $n\times n$ matrix with uniformly distributed random entries in $[0,1]$. For a symmetric matrix with similar properties we consider $S = (M + M^t)/2$.

\begin{figure}[t]
    \centering
    \includegraphics[width=.9\textwidth]{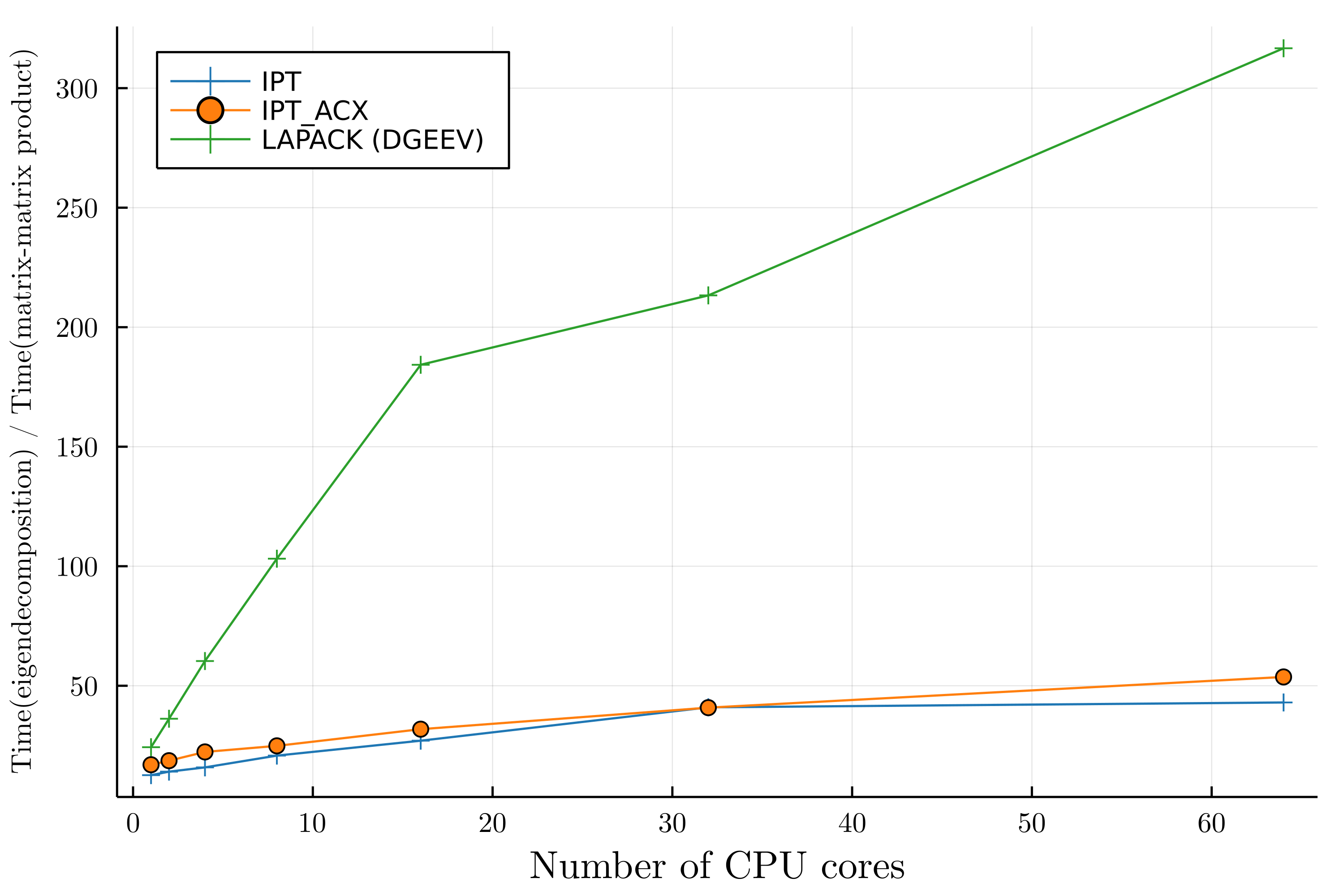}
    \caption{
       Time to compute the complete eigendecomposition of a dense, non-symmetric matrix with random entries scaled by $\varepsilon = 0.01$ compared to the time for one matrix-matrix multiplication (BLAS 3). Because IPT is perfectly parallel, this ratio hardly increases with the number of CPU cores used, in stark contrast with GEEV.
    }
    \label{parallel}
\end{figure}

Dense eigenproblems are normally solved with the QR or divide-or-conquer algorithm, implemented in the Linear Algebra PACKage (LAPACK) under the name (GEEV). On NVidia GPUs, the CUSOLVER \cite{c1998} package provides an efficient version of divide-and-conquer for symmetric matrices (SYEVD). As already emphasized, these methods do not take advantage of any particular structure in $M$ except symmetry; in particular they do not perform better on sparse or near-diagonal matrices than on general matrices. 

We timed multi-CPU and GPU implementations of IPT to these methods for matrices \cref{dense_matrix} with $\varepsilon = 10^{-2}$, double precision ($\eta = 10^{-12}$), and increasingly large dimension $n$. (Both IPT and IPT-ACX diverge for $\varepsilon \gtrsim .05$) compared with these reference routines (\cref{timings}). The result is that IPT is up to 2 orders of magnitude more efficient than the non-symmetric routine GEEV, and also more efficient than the (much faster) symmetric routine SYEVD.

The reason for this enhanced performance on near-diagonal problems is again due to the better parallelism of IPT. For the full-spectrum problem, IPT relies on matrix-matrix products which are efficiently parallelized with BLAS 3; the Hessenberg reduction with underlies direct methods does not benefit from such parallelism. \cref{parallel} shows the timing of eigendecompositions vs. that of matrix-matrix multiplications as a function of the number of CPU cores used. While this ratio increases with GEEV (indicating worse parallelism than BLAS 3), it does not with IPT (as expected).

\begin{figure}[t]
    \centering
    \includegraphics[width=.9\textwidth]{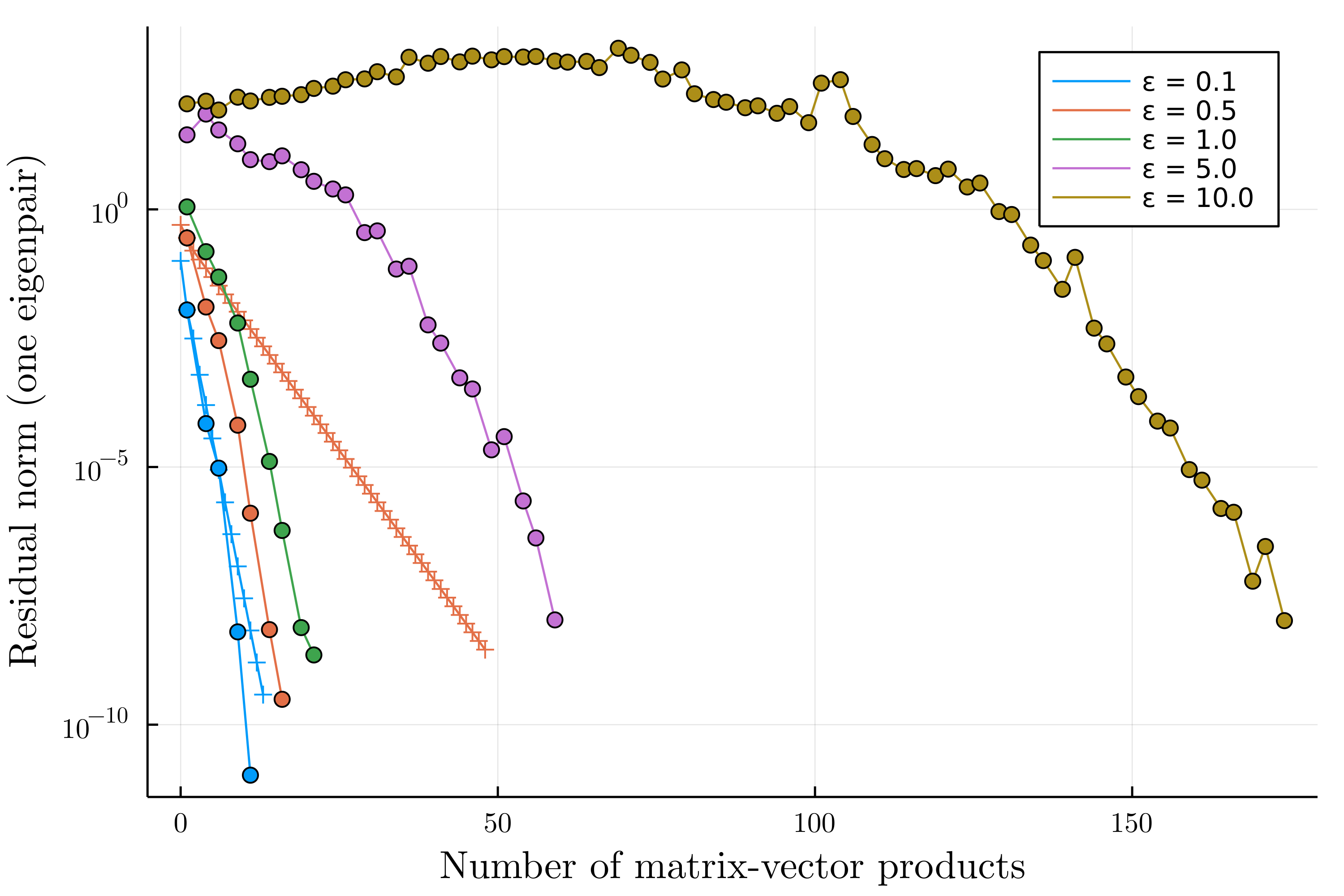}
    \caption{
        IPT and IPT-ACX convergence histories for a $5000\times 5000$ tri-diagonal matrix with off-diagonal element $\varepsilon$. As the latter increases beyond $\simeq 0.5$, IPT (crosses) ceases to converge; IPT-ACX (circles) allows for larger $\varepsilon$ but requires increasingly many matvecs. For $\varepsilon \gtrsim 15$, IPT-ACX also diverges. 
    }
    \label{fail}
\end{figure}

\section{Discussion}

We have presented a new eigenvalue algorithm for near-diagonal matrices, be them symmetric or non-symmetric, dense or sparse. IPT can be applied to obtain a single perturbed eigenvector and often outperforms state-of-the-art preconditioned eigensolvers. IPT can also be applied to the full spectrum problem; in that case IPT benefits from a lower theoretical complexity and lower runtime than dense eigensolvers based on Hessenberg reduction. The largest speed-ups---up to two-orders of magnitude---are obtained for non-symmetric, full spectrum problems. To our knowledge, IPT is the first full-spectrum eigensolver that is able to take initial guesses into account.

\begin{figure}[t]
    \centering
    \includegraphics[width=.9\textwidth]{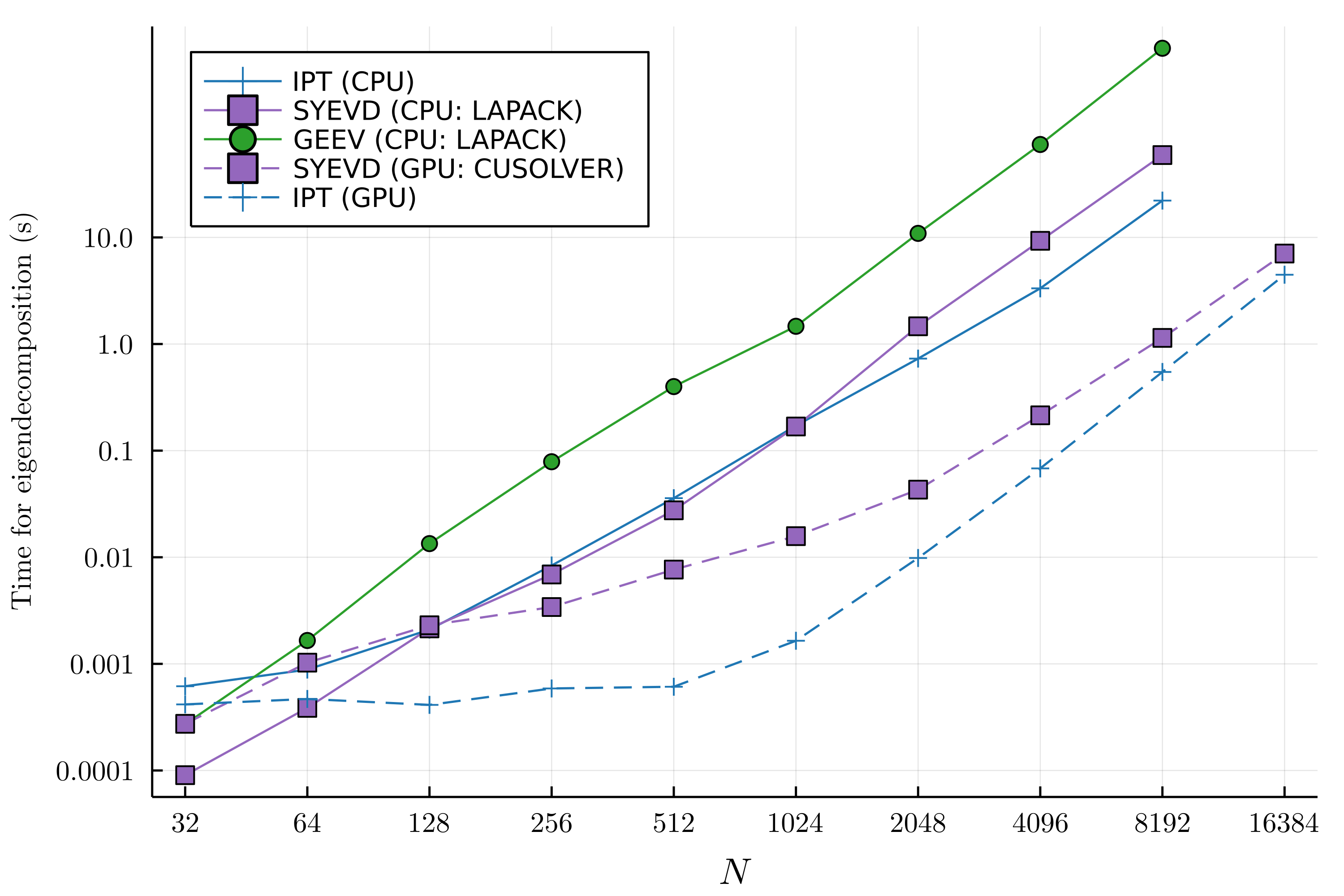}
    \caption{
        Time for the complete diagonalization of random matrices of the form \cref{dense_matrix} with $\varepsilon = 0.01$ and increasing size $n$. Continuous (resp. dashed) lines correspond to computations on the CPU (resp. GPU) using LAPACK (resp. CUSOLVER). The largest speed-up is obtained for non-symmetric matrices. 
    }
    \label{timings}
\end{figure}

Future work should focus on stabilizing our procedure for larger perturbations. For instance, when the perturbation is too large and IPT blows up, we may shrink $\parameter$ it to $\parameter/Q$ for some integer $Q$, diagonalize the matrix with this smaller
perturbation, restart the algorithm using the diagonal matrix thus
obtained as initial condition, and repeat the
operation $Q$ times. This approach is similar to the
homotopy continuation method for finding polynomial
roots \cite{Morgan_2009} and can effectively extend the domain of
convergence of the present iterative scheme. Another idea is to leverage the
projective-geometric structure outlined above, for
instance by using charts on the complex projective space other
than $z_i=1$, which would lead to different maps with
different convergence properties. A third possibility is to use the
freedom in choosing the diagonal matrix $D$ to construct maps
with larger convergence domains, a trick which is known to sometimes
improve the convergence of the RS series \cite{Surj_n_2004}. Finally, we saw that the convergence of IPT can be improved using acceleration methods. It would be interesting to see how far such ideas can go in extending the scope of IPT.

\section*{Acknowledgments}

We thank Rostislav Matveev and Alexander Heaton for useful discussions.

\bibliographystyle{ieeetr}
\bibliography{biblio.bib}

\newpage
\appendix

\section{Additional lemmas for Section~\ref{convergence}}\label{appendix:full_rank}

Here as in the main text we consider an $n \times n$ matrix $M$ with complex entries along with its partition $M = D + \Delta$, where $D$ is diagonal with pairwise distinct diagonal elements. The partition defines a mapping $F\colon \mathbb C^{n\times n} \to \mathbb C^{n\times n}$ as in the main text.

\begin{lemma}
\label{lemma-eigenspace}
Let $M = D + \Delta$ be a matrix with a partition that obeys (\ref{condition-unique}). Then there are no eigenvectors of $M$ from eigenspaces of dimension higher than one as columns of the corresponding unique fixed point $\Z^*$ of $F$ in $B_{\sqrt{2}}(I)$.
\end{lemma}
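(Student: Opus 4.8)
The plan is to argue by contradiction, leveraging the uniqueness of the fixed point $\Z^*$ in $B_{\sqrt{2}}(I)$ together with a symmetry of the map $F$ under unitary changes of basis that preserve the diagonal part $D$. Suppose some column of $\Z^*$, say the $i$-th, is an eigenvector $\mathbf z$ of $M$ lying in an eigenspace $E$ with $\dim E \geq 2$. The key observation is that $E$ then contains a whole projective line of eigenvectors, and infinitely many of them lie in the affine chart $U_i$ (those whose $i$-th coordinate is nonzero); each such eigenvector, normalized to have $i$-th coordinate $1$, is a fixed point of $\mathbf f_i$, hence can be assembled with the other columns of $\Z^*$ (which are unaffected, since $\mathbf f_j$ for $j \neq i$ only depends on the $j$-th column) into a matrix solving $\Z = F(\Z)$. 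So the strategy is: (1) show that at least one such alternative eigenvector in $E \cap U_i$ gives a matrix still lying in $B_{\sqrt 2}(I)$, contradicting uniqueness; or, if that proximity cannot be guaranteed directly, (2) use a continuity/deformation argument along the family $M_\parameter = D + \parameter\Delta$ analogous to the one in the proof of Lemma~\ref{theorem-rank}.

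Concretely, I would first record that replacing the $i$-th column of $\Z^*$ by $\mathbf e_i + t(\mathbf w - \mathbf e_i)$, where $\mathbf w \in E \cap U_i$ is another normalized eigenvector and $t \in [0,1]$, yields for every $t$ a matrix $\Z^*_t$ that still satisfies $\Z = F(\Z)$: indeed the segment of normalized vectors between two eigenvectors in the same eigenspace consists entirely of eigenvectors (the eigenvalue is constant on $E$, and affine-normalized combinations stay in $E$), so each $\Z^*_t$ is again a valid fixed point. If I can choose $\mathbf w$ with $\|\mathbf w - \mathbf e_i\|$ small — which is \emph{not} automatic, since $E$ need not pass near $\mathbf e_i$ — then $\Z^*_t \in B_{\sqrt 2}(I)$ for $t$ small, and $\Z^*_t \neq \Z^*$ for $t > 0$, contradicting the uniqueness from Theorem~\ref{main-th}. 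To remove the smallness assumption I would instead note that $\Z^* = \Z^*_0$ and $\Z^*_1$ are \emph{both} fixed points of $F$, both obtained by continuation from $\parameter = 0$ along $M_\parameter = D + \parameter\Delta$ (using Theorem~\ref{theorem-continuous}, the columns are differentiable in $\parameter$ as long as the relevant eigenvalues stay simple roots of the characteristic polynomial); but having an eigenspace of dimension $\geq 2$ means the characteristic polynomial has a root of multiplicity $\geq 2$ at $\parameter = 1$, so the hypotheses of Theorem~\ref{theorem-continuous} are exactly what rules out the branch reaching such a configuration while staying in $B_{\sqrt 2}(I)$. Phrased this way the lemma becomes: condition~\eqref{condition-unique} forces, via Theorem~\ref{main-th} and Theorem~\ref{theorem-continuous}, every eigenvalue touched by a column of $\Z^*$ to be geometrically simple.

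The main obstacle I anticipate is making the deformation argument airtight without circularity: Lemma~\ref{theorem-rank} in the main text already invokes Lemma~\ref{lemma-eigenspace}, so the proof here must rely only on Theorem~\ref{main-th} (uniqueness in $B_{\sqrt 2}(I)$) and on Theorem~\ref{theorem-continuous} (differentiability of the relevant projective points under deformation), not on any full-rank statement. The cleanest route is probably the first one — exhibiting a second, genuinely distinct fixed point inside the ball — and the technical work is to control the location of $E \cap U_i$ relative to $\mathbf e_i$. A convenient device is to again unitarily rotate within the problematic eigenspace: since $D$ commutes with nothing useful here, one instead uses that any two distinct normalized eigenvectors in $E \cap U_i$ already suffice, and one of them can be taken to be $\mathbf z$ itself (the original column) while the other is obtained by adding a small multiple of a second basis vector of $E$ suitably rescaled to keep the $i$-th coordinate equal to $1$; the difference between the two is then $O(\epsilon)$ by construction, which is all that is needed. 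I expect the remaining estimates to be routine applications of the submultiplicativity of $\|\cdot\|$ already used in the proof of Theorem~\ref{main-th}.
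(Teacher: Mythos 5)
Your proposal is essentially the paper's own argument: replacing the offending column of $\Z^*$ by other normalized eigenvectors from the same eigenspace yields a positive-dimensional family of fixed points of $F$ passing through $\Z^*$, and since small deformations stay inside $B_{\sqrt{2}}(I)$ this contradicts the uniqueness guaranteed by Theorem~\ref{main-th}. One small slip: your interpolation should run between the original column $\mathbf z$ and $\mathbf w$ (i.e.\ $\mathbf z + t(\mathbf w - \mathbf z)$), not between $\mathbf e_i$ and $\mathbf w$, but the construction in your final paragraph (perturbing $\mathbf z$ by a small rescaled multiple of a second basis vector of $E$ while keeping the $i$-th coordinate equal to $1$) is exactly the needed repair and matches the paper's reasoning.
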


\begin{proof}
If $\Z^*$ contains an eigenvector from an eigenspace of $M$ with dimension higher than one, then any matrix $\Z$ obtained by a substitution in $\Z^*$ of that eigenvector with a vector from this eigenspace is a fixed point of $F$, too. This means that there is a whole subspace of fixed points of $F$ of dimension higher than zero which includes $\Z^*$. But this contradicts the uniqueness of $\Z^*$ in $B_{\sqrt{2}}(I)$.
\end{proof}

\begin{lemma}
\label{lemma-defective}
If $M$ and its partition obey (\ref{condition-unique}) and $M$ is defective, then the unique fixed point $\Z^*$ of $F$ in $B_{\sqrt{2}}$ does not contain any eigenvector causing the defect.
\end{lemma}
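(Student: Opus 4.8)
The plan is to argue by contradiction, combining the contraction estimate from the proof of Theorem~\ref{main-th} with an explicit computation of the Jacobian of the column maps $\mathbf f_i$ from \cref{single_line_dyn}.

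First I would observe that $F$ acts independently on columns: comparing \cref{dyn} with \cref{single_line_dyn}, the $i$-th column of $F(\Z)$ is $\mathbf f_i$ applied to the $i$-th column of $\Z$. Hence the derivative $\partial F$ at any matrix is block diagonal with blocks $\partial\mathbf f_i$, and the multipliers (derivative eigenvalues) of $F$ at a fixed point $\Z^*$ are exactly the union over $i$ of the multipliers of each $\mathbf f_i$ at the $i$-th column of $\Z^*$, each such column being a fixed point of $\mathbf f_i$ and therefore (by Lemma~\ref{subsystem-equivalence}) an eigenvector of $M$. Since $F$ is a polynomial map it is differentiable everywhere, so the Lipschitz bound with constant $L = \|G\|\|\Delta\|\,\bigl(1+2(1+\sqrt2)\bigr) < 1$ established on $B_{\sqrt2}(I)$ in the proof of Theorem~\ref{main-th} passes to the derivative: $\|\partial F(\Z^*)\| \le L < 1$ in spectral norm, even if $\Z^*$ sits on the boundary of the ball. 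Consequently every multiplier of every $\mathbf f_i$ at a column of $\Z^*$ has modulus strictly less than $1$.

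The core of the argument is then to show that if the $i$-th column $\mathbf z^*$ of $\Z^*$ is an eigenvector \emph{causing the defect} --- i.e.\ $M\mathbf z^* = \eigval\mathbf z^*$, $z_i^* = 1$, and there is a generalized eigenvector $\mathbf u$ with $(M-\eigval I)\mathbf u = \mathbf z^*$ --- then $1$ is a multiplier of $\mathbf f_i$ at $\mathbf z^*$, which contradicts the previous paragraph. To do this I would differentiate \cref{single_line_dyn} to get $\bigl(\partial\mathbf f_i(\mathbf z)\,\mathbf w\bigr)_j = (\mathbf g_i)_j\bigl(w_j(\Delta\mathbf z)_i + z_j(\Delta\mathbf w)_i - (\Delta\mathbf w)_j\bigr)$, normalize the generalized eigenvector so that $u_i = 0$ (possible via $\mathbf u\mapsto\mathbf u+c\mathbf z^*$ since $z_i^*=1$), and note that the $i$-th component of $(M-\eigval I)\mathbf u=\mathbf z^*$ then forces $(\Delta\mathbf u)_i = 1$. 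Using $(\mathbf g_i)_j = (\evzero_j-\evzero_i)^{-1}$ for $j\neq i$, $(\Delta\mathbf z^*)_i = \eigval-\evzero_i$ and $(M\mathbf w)_j = \evzero_j w_j + (\Delta\mathbf w)_j$, a short rearrangement rewrites $\partial\mathbf f_i(\mathbf z^*)\mathbf w = \mathbf w$ as the pair of conditions $w_i = 0$ and $\bigl((M-\eigval I)\mathbf w\bigr)_j = z_j^*(\Delta\mathbf w)_i$ for $j\neq i$. Both hold for $\mathbf w = \mathbf u$ --- the first is $u_i=0$, the second collapses to $z_j^* = z_j^*$ --- and $\mathbf u\neq 0$ because $(M-\eigval I)\mathbf u = \mathbf z^*\neq 0$. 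Hence $\mathbf u$ is an eigenvector of $\partial\mathbf f_i(\mathbf z^*)$ with eigenvalue $1$, the desired contradiction.

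The hard part is really just the bookkeeping in this last step: keeping the two normalizations ($z_i^* = 1$ from the affine chart, $u_i = 0$ from the Jordan-chain freedom) straight, exploiting the identity $(\Delta\mathbf u)_i = 1$ they jointly imply, and checking that the vanishing of row $i$ of $\partial\mathbf f_i$ is consistent with $w_i=0$. Conceptually none of this is surprising --- it is the statement, already flagged in Section~\ref{cusps-and-points}, that a nontrivial Jordan block of $M$ shows up as a fold bifurcation (a merging of fixed points) of $\mathbf f_i$, where a multiplier crosses $1$ --- but turning that bifurcation picture into a clean contradiction with the contraction property seems to require the explicit Jacobian rather than a soft argument. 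A minor secondary point is simply to note that the Lipschitz bound of Theorem~\ref{main-th} is genuinely global on $B_{\sqrt2}(I)$, so it controls $\|\partial F\|$ at the fixed point wherever that fixed point happens to lie in the ball.
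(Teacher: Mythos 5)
Your proposal is correct, but it takes a genuinely different route from the paper. The paper argues by deformation: after invoking Lemma~\ref{lemma-eigenspace} to reduce to isolated eigenvectors, it perturbs the Jordan form, $M_{\mathbf d} = M + S\diag(\mathbf d)S^{-1}$, so that the defective eigenvector splits into several nearby eigenvectors while $\|G\|\|\Delta_{\mathbf d}\|$ still satisfies (\ref{condition-unique}); the offending column of $\Z^*$ would then spawn several distinct fixed points arbitrarily close to $\Z^*$, hence still in $B_{\sqrt{2}}(I)$, contradicting the uniqueness given by Theorem~\ref{main-th} for the deformed map. You instead work entirely at the fixed point itself: the column structure of \cref{dyn} makes $\partial F(\Z^*)$ block diagonal with blocks $\partial\mathbf f_i$, the contraction estimate bounds $\|\partial F\|$ on the closed ball by $\|G\|\|\Delta\|\,(3+2\sqrt{2})<1$ (so every multiplier has modulus below $1$, also at a boundary point, by continuity of the polynomial derivative or by estimating $\partial F$ directly from \cref{dyn}), and your Jacobian computation shows that a Jordan chain $(M-\eigval I)\mathbf u=\mathbf z^*$, normalized so that $u_i=0$ and hence $(\Delta\mathbf u)_i=1$, produces the multiplier $1$; the reduction of $\partial\mathbf f_i(\mathbf z^*)\mathbf w=\mathbf w$ to $w_i=0$ and $\bigl((M-\eigval I)\mathbf w\bigr)_j=z_j^*(\Delta\mathbf w)_i$ is algebraically correct, so the contradiction is sound. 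Your route buys a sharper, self-contained local argument: it avoids the paper's somewhat informal continuity claims about the split eigenvectors and fixed points remaining close and inside the ball, it needs no preliminary reduction via Lemma~\ref{lemma-eigenspace} (though that lemma is still what covers eigenvectors of a defective eigenvalue of geometric multiplicity at least two that do not lie in the image of $M-\eigval I$, which is how the phrase ``causing the defect'' is used when Lemma~\ref{theorem-rank} combines the two lemmas), and it makes explicit the link, only alluded to in Section~\ref{cusps-and-points} and \cref{appendix:cusps}, between defectiveness and a unit multiplier (fold bifurcation). The paper's deformation argument, in exchange, requires no derivative computation at all and reuses only the Banach fixed-point uniqueness already established.
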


\begin{proof}
By Lemma~(\ref{lemma-eigenspace}), it is enough to assume that $M$ does not have geometrically multiple eigenvalues and all its eigenvectors are isolated projective points, so all fixed points of $F$ are isolated matrices. Let such $M$ be defective. Let $J$ be a Jordan normal form of $M$ and $S$ be the transition matrix that turns $M$ into $J$. Consider a deformation $J_{\mathbf d}$ of $J$ given by $J_{\mathbf d} = J + \diag(\mathbf d)$, where $\mathbf d \in \mathbb C^n$. This induces a deformation of $M$ given by $M_{\mathbf d} = M + S\diag(\mathbf d) S^{-1}$. It is clear that one can find $\mathbf d$ arbitrarily close to 0 (in the standard topology of $\mathbb C^n$) such that all diagonal entries of $J_{\mathbf d}$ are different. As $J_{\mathbf d}$ is upper triangular, its eigenvalues (and thus those of $M_{\mathbf d}$) are equal to its diagonal entries. But then $M_{\mathbf d}$ has a full eigenbasis of isolated eigenvectors. The formerly merged eigenvectors split into distinct ones. Furthermore, the new eigenvectors that split from an old defective eigenvector can be made arbitrarily close to it (as projective points in the standard topology of $\mathbb CP^{n-1}$) by choosing $\mathbf d$ close enough to 0. This is obvious from considering the eigenvectors spawned from a single block of $J_{\mathbf d}$ and their behavior at the limit $\mathbf d \to 0$.

All this implies that there is a deformation $M_{\mathbf d}$ of $M$ such that in its partition $M_{\mathbf d} = D + \Delta_{\mathbf d}$, $\Delta_{\mathbf d} = \Delta + S\diag(\mathbf d)S^{-1}$, $\|\Delta_{\mathbf d} - \Delta\|$ is arbitrarily small and thus $\|G\| \|\Delta_{\mathbf d}\| < 3 - 2\sqrt{2}$ still holds. At the same time, $\Z^*$ splits into several arbitrarily close to it, and thus still contained in $B_{\sqrt{2}}(I)$ for small enough $\mathbf d$, fixed points of the mapping $F_{\mathbf d}$ defined by the partition, with two or more of them being full-rank and multiple associated lower rank fixed points. But this contradicts the uniqueness of the fixed point of $F_{\mathbf d}$ computed for the deformed matrix $M_{\mathbf d}$.
\end{proof}

\begin{theorem}[\cite{Lax2007} Theorem 8, page 130]
\label{theorem-continuous}
Let $M_\parameter$ be a differentiable matrix-valued function of real $\parameter$, $a_\parameter$ an eigenvalue of $M_\parameter$ of multiplicity one. Then we can choose an eigenvector $\mathbf h_\parameter$ of $M_\parameter$ pertaining to the eigenvalue $a_\parameter$ to depend differentiably on $\parameter$.
\end{theorem}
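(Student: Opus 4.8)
The statement is the classical fact that a simple eigenvalue carries along a differentiable eigenvector, and the plan is to deduce it from the implicit function theorem applied to a bordered form of the eigenvalue equation. First I would fix a base parameter value $\parameter_0$, pick an eigenvector $\mathbf h_0$ of $M_{\parameter_0}$ belonging to $a_{\parameter_0}$, and choose a linear functional $\ell$ on $\mathbb C^n$ with $\ell(\mathbf h_0)=1$. Then I would study the zeros near $(\parameter_0,\mathbf h_0,a_{\parameter_0})$ of the map
\[
\Phi(\parameter,\mathbf h,\lambda)=\big((M_\parameter-\lambda I)\mathbf h,\ \ell(\mathbf h)-1\big),
\]
whose zeros are precisely the eigenpairs of $M_\parameter$ under the normalization $\ell(\mathbf h)=1$.

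The heart of the argument is to show that the partial differential of $\Phi$ in $(\mathbf h,\lambda)$ at the base point, the bordered matrix
\[
\begin{pmatrix} M_{\parameter_0}-a_{\parameter_0}I & -\mathbf h_0 \\ \ell & 0 \end{pmatrix},
\]
is invertible. This is where the multiplicity-one hypothesis enters: algebraic multiplicity one gives the direct sum decomposition $\mathbb C^n=\ker(M_{\parameter_0}-a_{\parameter_0}I)\oplus\operatorname{ran}(M_{\parameter_0}-a_{\parameter_0}I)$ with one-dimensional kernel spanned by $\mathbf h_0$, and in particular $\mathbf h_0\notin\operatorname{ran}(M_{\parameter_0}-a_{\parameter_0}I)$. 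Feeding a kernel vector $(\mathbf v,\mu)$ of the bordered matrix into these facts forces $\mu=0$ (otherwise $\mathbf h_0$ would lie in the range), then $\mathbf v\in\ker(M_{\parameter_0}-a_{\parameter_0}I)$, so $\mathbf v=c\,\mathbf h_0$, and finally $c=\ell(\mathbf v)=0$; hence the bordered matrix is injective and therefore invertible.

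By the implicit function theorem this yields, on a neighbourhood of each $\parameter_0$, a differentiable branch $\parameter\mapsto(\mathbf h_\parameter,\lambda_\parameter)$ of zeros of $\Phi$; continuity of eigenvalues together with the multiplicity-one assumption identifies $\lambda_\parameter$ with $a_\parameter$, so $\mathbf h_\parameter$ is an honest eigenvector depending differentiably on $\parameter$. Because the implicit function theorem also delivers local uniqueness, two such branches coincide on the overlap of their domains, and a standard connectedness argument on the parameter interval patches them into a single globally differentiable $\mathbf h_\parameter$. The main obstacle is this globalization bookkeeping rather than any single estimate: one must check that the normalizing functional can be kept consistent across overlapping neighbourhoods and that the local branches genuinely agree, which is exactly what local uniqueness provides.

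As an alternative route I would mention the Riesz projection, which avoids the bordered system: take a small circle $\gamma$ isolating $a_{\parameter_0}$ from the rest of the spectrum, set $P_\parameter=\frac{1}{2\pi i}\oint_\gamma (zI-M_\parameter)^{-1}\,dz$, and note that for $\parameter$ near $\parameter_0$ this is a rank-one projection onto the eigenspace that depends differentiably on $\parameter$ because the resolvent does, with $\partial_\parameter(zI-M_\parameter)^{-1}=(zI-M_\parameter)^{-1}M_\parameter'(zI-M_\parameter)^{-1}$ uniformly for $z\in\gamma$; then $\mathbf h_\parameter=P_\parameter\mathbf u$ for any fixed $\mathbf u$ with $P_{\parameter_0}\mathbf u\neq 0$ works, subject to the same patching step.
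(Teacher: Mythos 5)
There is nothing in the paper to compare your argument against: the statement is not proved in the paper at all, but imported verbatim from Lax's textbook (\cite{Lax2007}, Theorem 8, p.~130) and used as a black box in the proof of Lemma~\ref{theorem-rank}. Judged on its own, your proof is correct and standard: the bordered matrix $\bigl(\begin{smallmatrix} M_{\parameter_0}-a_{\parameter_0}I & -\mathbf h_0\\ \ell & 0\end{smallmatrix}\bigr)$ is indeed invertible precisely because algebraic simplicity gives $\mathbb C^n=\ker(M_{\parameter_0}-a_{\parameter_0}I)\oplus\operatorname{ran}(M_{\parameter_0}-a_{\parameter_0}I)$ with one-dimensional kernel, and the Riesz-projection alternative is equally sound. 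Two refinements are worth recording. First, the theorem assumes only pointwise differentiability of $\parameter\mapsto M_\parameter$, whereas the textbook implicit function theorem wants $C^1$ data; the clean fix is to apply the ($C^\infty$) implicit function theorem to the polynomial map $(M,\mathbf h,\lambda)\mapsto\bigl((M-\lambda I)\mathbf h,\ \ell(\mathbf h)-1\bigr)$ with the matrix $M$ itself as the parameter, obtaining a smooth solution map near $M_{\parameter_0}$, and then compose with the differentiable curve $\parameter\mapsto M_\parameter$; the same remark rescues the resolvent route, since the difference quotient of $(zI-M_\parameter)^{-1}$ converges uniformly on the contour. Second, in the globalization step local uniqueness only identifies branches that share the same normalizing functional $\ell$; branches built from different base points agree only up to a nonvanishing differentiable scalar, so over an interval one patches by rescaling (harmless in one real parameter) rather than by uniqueness alone. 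Neither point is a substantive gap, and since the result is classical your derivation is a perfectly acceptable substitute for the citation.
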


The theorem guarantees that the eigenvector corresponding to an eigenvalue with multiplicity one is a differentiable function of the parameter as a projective space valued function.

\section{Rayleigh-Schr\"odinger perturbation theory}\label{appendix:RSPT}

Here we recall the derivation of the Rayleigh-Schr\"odinger recursion, given \emph{e.g.} in \cite{Roth_2010}. The idea behind the original perturbation theory is the following. Let a parametric family of matrices $M_\parameter = D + \parameter \Delta$ be given, where $\parameter \in \mathbb C$ is the parameter, $M_\parameter \in \mathbb C^{n\times n}$, $M_0 = D$ is diagonal and is called the \emph{unperturbed} matrix, and $\Delta$ is the \emph{perturbation} matrix. The problem is to find eigenvectors of $M_\parameter$ in the form of an asymptotic series in powers of the parameter $\parameter$. We specifically assume a generic case of the unperturbed matrix $D$ having distinct diagonal entries. The perturbation matrix $\Delta$ can be arbitrary.

Let $\eigval_i \in \mathbb C$ and $\mathbf z_i \in \mathbb C^n$ respectively be the $i$-th eigenvalue and the $i$-th eigenvector of $M_\parameter$ ordered such that
\begin{equation}\label{ordering}
\lim_{\parameter \to 0}\eigval_i = \evzero_i, \quad \lim_{\parameter\to 0}\mathbf z_i=\mathbf e_i,
\end{equation}
where $\evzero_i = D_{ii}$, $\mathbf e_i$ are the standard basis vectors of $\mathbb C^n$, and normalized according to $\langle \mathbf e_i,\mathbf z_i\rangle =1$ for all $\parameter$ (a choice known in physics as ``intermediate normalization''), where by $\langle\cdot,\cdot\rangle$ we understand the standard scalar product in $\mathbb C^n$. Such normalization corresponds to singling out eigenvectorst from the affine charts $U_i$ seen as affine subspaces of $\mathbb C^n$ as defined in Section~\ref{fixed-points} of the main text. Therefore, the matrix composed of $\mathbf z_i$ in the chosen order is matrix $\Z$ from the main text. The possibility to order the eigenvectors in such a way that (\ref{ordering}) holds follows from the well known fact that, given the condition on $D$ and the chosen normalization, $\eigval_i$ and $\mathbf z_i$ are holomorphic in $\parameter$ in a certain disk around 0, $\mathbf z_i$ are distinct, and these functions are given by series convergent in that disk \cite{kato1995}:
\begin{equation}\label{power-series}
\eigval_i=\sum_{\ell\geq 0}\parameter^{\ell}\eigval_{i}^{[\ell]}\quad\textrm{and}\quad \mathbf z_i=\sum_{\ell\geq 0}\parameter^{\ell}\mathbf z_{i}^{[\ell]}.
\end{equation}
As a consequence, matrix $\Z$ itself is holomorphic in the same disk and can be represented by a power series in $\parameter$
\begin{equation}\label{A-series}
\Z = \sum_{\ell \geq 0} \parameter^\ell \Z^{[\ell]}.
\end{equation}

Substituting expressions (\ref{power-series}) into the eigenvalue equation $M\mathbf z=\eigval \mathbf z$ and making use of the Cauchy product formula, this yields $D\mathbf z^{[0]}_i=\eigval^{[0]}_i\mathbf z^{[0]}_i$ at zeroth order (hence $\eigval^{[0]}_i=\evzero_i$) and for $\ell\geq 1$
$$(D-\evzero_i I)\mathbf z_{i}^{[\ell]}=\sum_{s=1}^{\ell}\eigval_{i}^{[s]}\mathbf z_{i}^{[\ell-s]}-\Delta \mathbf z_{i}^{[\ell-1]},$$
where $I$ is the unit $n\times n$ matrix.

It is convenient to expand $\mathbf z_{i}^{[\ell]}$ in the basis of the eigenvectors of $D$ as
$\mathbf z_{i}^{[\ell]}=\sum_{j=1}^{n}\Z_{ji}^{[\ell]} \mathbf e_j$ with $\Z_{ij}^{[0]}=\delta_{ij}$ and $\Z_{ii}^{[\ell]}=0$ for $\ell\geq 1$. By construction, $\Z_{ij}^{[\ell]}$ are elements of matrices $\Z^{[\ell]}$ from (\ref{A-series}). This gives for each $\ell\geq 1$ and $1\leq j\leq n$
$$
(\evzero_j-\evzero_i)\Z_{ji}^{[\ell]}=\sum_{s=1}^{\ell}\eigval_{i}^{[s]}\Z_{ji}^{[\ell-s]}-(\Delta \Z^{[\ell-1]})_{ji}.
$$
The equation for the eigenvalues correction is extracted by swapping $i$ with $j$ in this equation and using $\Z_{ii}^{[\ell]}=\delta_{\ell,0}$. This leads to
$\eigval_{i}^{[\ell]}=(\Delta \Z^{[\ell-1]})_{ii}.$
Injecting this back into the equation above we arrive at
\begin{equation}\label{RS-recursion}
\Z_{ij}^{[\ell]}=G_{ij}\Big(\sum_{s=1}^{\ell} (\Delta \Z^{[s-1]})_{jj} \Z_{ij}^{[\ell - s]} - (\Delta \Z^{[\ell-1]})_{ij}\Big).
\end{equation}

\section{Iterative perturbation theory contains the RS
series}\label{appendix:IPT_RSPT}

We prove $\Z^{(k)}  = \Z_\mathrm{RS}^{(k)} + \mathcal O(\parameter^{k+1})$ by induction. Obviously $\Z^{(0)} = \Z_\mathrm{RS}^{(0)} = I$. Suppose that $\Z^{(k-1)} = \Z_\mathrm{RS}^{(k-1)} + \mathcal O(\parameter^k)$ or, more specifically,
$$
\Z^{(k-1)} = \sum_{\ell = 0}^{k-1} \parameter^\ell  \Z^{[\ell]} + \mathcal O(\parameter^k),
$$
where the matrices $\Z^{[\ell]}$ satisfy the recursion (\ref{RS-recursion}). Then from $\Z^{(k)} = F(\Z^{(k-1)})$ we have
$$
\Z^{(k)} = I +\parameter G\circ\left(
\sum_{m=0}^{k-1} \parameter^m \Z^{[m]} \mathcal D \left(\sum_{\ell=0}^{k-1} \parameter^\ell \Delta \Z^{[\ell]} \right) -
\sum_{\ell=0}^{k-1}\parameter^{\ell} \Delta \Z^{[\ell]} \right) + \mathcal O(\parameter^{k+1}).
$$
From this expression it is easy to see that the term of $s$-th order in $\parameter$  in $\Z^{(k)}$ is given by terms with $\ell + m = s - 1$, \emph{viz.} 
$$
\parameter^s G \circ \left(
\left(\sum_{\ell=0}^{s-1} \Z^{[s-1-\ell]} \mathcal D \left(\Delta \Z^{[\ell]}\right)\right)
- \Delta \Z^{[s-1]}
\right).
$$
This term matches exactly the RS correction term $\parameter^s \Z   ^{[s]}$. This concludes the proof.

\section{Explicit examples}\label{appendix:examples}
\label{further-examples}

\subsection{A general approach to find the convergence domain}

Following the notations of the main text, consider the map $\mathbf f_i$ for some fixed $i$. An attracting equilibrium point of the corresponding dynamical system $\mathbf z^{(k)}= \mathbf f_i(\mathbf z^{(k-1)})$ loses its stability when the Jacobian matrix $\partial \mathbf f_i$ of $\mathbf f_i$, $(\partial \mathbf f_i)_{js} \equiv \partial (\mathbf f_i)_j/\partial z_s  = \partial F_{ji}/\partial z_s$, has an eigenvalue (called \emph{multiplier} in this case of a discrete-time dynamical system) with absolute value equal to $1$ at this point. The convergence domain of the dynamical system \cref{dyn} for the whole matrix $\Z$ of the eigenvectors is equal to the intersection of the convergence domains for its individual lines \cref{single_line_dyn}.

Consider the system of $n+1$ polynomial equations of $n+2$ complex variables ($z_j$ for $1\leq j\leq n$, $\parameter$, and $\mu$)

$$
\begin{cases}
\mathbf z = \mathbf f_i(\mathbf z),\\
\det(\partial \mathbf f_i - \mu I) = 0.
\end{cases}
\label{system-one} 
$$
The variable $\mu$ here plays the role of a multiplier of a steady state. Either by successively computing resultants or by constructing a Groebner basis with the correct lexicographical order, one can exclude the variables $z_j$ from this system, which results in a single polynomial of two variables $(\parameter,\mu) \mapsto P(\parameter,\mu)$. This polynomial defines a complex 1-dimensional variety. The projection to the $\parameter$-plane of the real 1-dimensional variety defined by $\{P=0, \vert\mu\vert^2= 1\}$ corresponds to some curve $C$. A more informative way is to represent this curve as a complex function of a real variable $t$ implicitly defined by $P(\parameter,e^{it}) = 0$.

The curve $C$ is the locus where a fixed point of $\mathbf f_i$ have a multiplier on the unit circle. In particular, the fixed point that at $\parameter = 0$ corresponds to $z_i = 1$ and $z_j = 0$, $j \neq i$, loses its stability along a particular subset of this curve. The convergence domain of the iterative perturbation theory is the domain that is bounded by these parts of the curve and that contains $0$. It is possible to show that $C$ is a smooth curve with cusps (return points), which correspond to the values of $\parameter$ such that $M$ has a nontrivial Jordan normal form (is non-diagonalizable), see \cref{appendix:cusps}.

\subsection{The $2\times 2$ example of the main text}

According to the procedure outlined above, the set where IPT loses stability ($C$, the blue curve in  \cref{2d}) is found to be given by the parametric equation $4\parameter^2 + e^{it}(2 - e^{it}) = 0$ (same equation for both eigenvectors). The cusps (return points, the points where $d\parameter/dt = 0$) of this curve are at $\parameter = \pm i/2$. In this particular case, the convergence circle of the RS perturbation theory is completely contained in the convergence domain of the iterative perturbation theory, and their boundaries intersect only at the cusp points. This convergence domain for IPT is directly related to the main cardioid of the classical Mandelbrot set: the set of complex values of the parameter $c$ that lead to a bound trajectory of the classical quadratic (holomorphic) dynamical system $x^{(k+1)} = (x^{(k)})^2 + c$. The main cardioid of the Mandelbort set (the domain of stability of a steady state) is bounded by the curve $4c - e^{it}(2 - e^{it}) = 0$. The boundary of the stability domain of our $2\times2$ example is simply a conformal transform of this cardioid by two complementary branches of the square root function composed with the sign inversion. The origin of this relation becomes obvious after the parameter change $c \mapsto -\parameter^2$ followed by the variable change $x \mapsto \parameter x$. This brings the classical system to the dynamical system of the only nontrivial component of the first line for our $2\times 2$ example: $x^{(k+1)} = \parameter \left((x^{(k)})^2 - 1\right)$. The nontrivial component of the second line follows an equivalent (up to the sign change of the variable) equation: $x^{(k+1)} = \parameter \left(1 - (x^{(k)})^2 \right)$.

This particular $2 \times 2$ example is very special. In fact, any 2-dimensional case is special in the following sense. The iterative approximating sequence for $M = D + \parameter \Delta$ for any $D$ and $\Delta$ takes the form
$$\Z^{(k)} =
\begin{pmatrix}
    1 & f_1^k(0) \\
    f_2^k(0) & 1
\end{pmatrix}
,$$
where $f_j$ are univariate quadratic polynomial functions related by $g_1(x) = x^2 g_2(1/x)$, with $g_j(x) \equiv x - f_j(x)$. The first special feature of this recursion scheme is that it is equivalent to a 1-dimensional quadratic discrete-time dynamical system for each column of $A$. This implies that the only critical point of either $f_j$ ($0$, when the diagonal elements of $\Delta$ are equal) is necessarily attracted by at most unique stable fixed point. The second special feature is fact that both columns have exactly the same convergence domains in the $\parameter$-plane. To see this, suppose that $x_1$ and $x_2$ are the roots of $g_1$. Then it follows that $1/x_1$ and $1/x_2$ are the roots of $g_2$. As $g_2'(1/x) = 2g_1(x)/x - g_1'(x)$, and (like for any quadratic polynomial) $g_1'(x_1) = -g_1'(x_2)$, we also see that $g_1'(x_{1,2}) = g_2'(1/x_{2,1})$, and thus $f_1'(x_{1,2}) = f_2'(1/x_{2,1})$. Therefore, the fixed point of the dynamical systems defined by $x^{(k+1)} = f_1(x^{(k)})$ corresponding to an eigenvector and the fixed point of $x^{(k+1)} = f_2(x^{(k)})$ corresponding to the different eigenvector are stable or unstable simultaneously.

These two properties are not generic when $n>2$. Therefore, we provide another explicit example of a $3 \times 3$ matrix to foster some intuition for more general cases.

\subsection{An additional $3\times 3$ example}

Consider the following parametric $3\times3$ matrix and its partition:
$$
M =
\begin{pmatrix}
0 & 0 & 0 \\
0 & 1 & 0 \\
0 & 0 & 3
\end{pmatrix}
+
\parameter
\begin{pmatrix}
0 & 1 & 2 \\
1 & 0 & 3 \\
2 & 3 & 0
\end{pmatrix}
.
$$
The polynomial $P$ that defines the fixed point degeneration curve $C$ here takes the form for $\mathbf z_1$

\begin{align}
P(\parameter,\mu) &= 63792 \parameter^{7} - 28352 \parameter^{6} \mu - 68040 \parameter^{6} - 29556 \parameter^{5} \mu^{2} + 89352 \parameter^{5} \mu\nonumber\\
&- 13239 \parameter^{5} + 960 \parameter^{4} \mu^{3}
 + 14516 \parameter^{4} \mu^{2} - 39164 \parameter^{4} \mu + 12116 \parameter^{4}\nonumber\\ 
&+ 5616 \parameter^{3} \mu^{4} - 26658 \parameter^{3} \mu^{3} + 29988 \parameter^{3} \mu^{2} - 546 \parameter^{3} \mu - 2448 \parameter^{3}\nonumber\\ 
&+ 468 \parameter^{2} \mu^{5} - 3720 \parameter^{2} \mu^{4} + 12820 \parameter^{2} \mu^{3} - 17648 \parameter^{2} \mu^{2} + 7584 \parameter^{2} \mu\nonumber\\ 
&- 1296 \parameter^{2} - 243 \parameter \mu^{6} + 1404 \parameter \mu^{5} - 2619 \parameter \mu^{4} + 1350 \parameter \mu^{3} + 432 \parameter \mu^{2}\nonumber\\ 
&+ 108 \mu^{6} - 792 \mu^{5} + 1980 \mu^{4} - 1872 \mu^{3} + 432 \mu^{2},
\label{poly-n1} 
\end{align}
for $\mathbf z_2$

\begin{align}
P(\parameter,\mu) &= 113408 \parameter^{7} - 63792 \parameter^{6} \mu - 120960 \parameter^{6} + 7416 \parameter^{5} \mu^{2} + 53208 \parameter^{5} \mu\nonumber\\  
&+ 36424 \parameter^{5} + 6525 \parameter^{4} \mu^{
3} - 11034 \parameter^{4} \mu^{2} - 13824 \parameter^{4} \mu - 5664 \parameter^{4}\nonumber\\ 
&- 3156 \parameter^{3} \mu^{4} - 2472 \parameter^{3} \mu^{3} + 10332 \parameter^{3} \mu^{2} + 3696 \parameter^{3} \mu + 3088 \parameter^{3}\nonumber\\  &- 72 \parameter^{2} \mu^{5} + 1800 \parameter^{2} \mu^{4} - 1800 \parameter^{2} \mu^{3} - 2088 \parameter^{2} \mu^{2} - 1296 \parameter
^{2} \mu\nonumber\\ 
&- 576 \parameter^{2} + 128 \parameter \mu^{6} - 24 \parameter \mu^{5} - 736 \parameter \mu^{4} - 120 \parameter \mu^{3} + 1328 \parameter \mu^{2}\nonumber\\ 
&- 72 \mu^{6} + 108 \mu^{5} + 360 \mu^{4} - 432 \mu^{3} - 288 \mu^{2},
\label{poly-n2} 
\end{align}
and for $\mathbf z_3$

\begin{align}
P(\parameter,\mu) &= 35440 \parameter^{7} - 42528 \parameter^{6} \mu - 37800 \parameter^{6} - 32360 \parameter^{5} \mu^{2} + 110080 \parameter^{5} \mu\nonumber\\ 
&- 23295 \parameter^{5} + 29112 \parameter^{4} \mu^
{3} - 4800 \parameter^{4} \mu^{2} - 88614 \parameter^{4} \mu + 51024 \parameter^{4}\nonumber\\ 
&+ 14640 \parameter^{3} \mu^{4} - 78760 \parameter^{3} \mu^{3} + 116760 \parameter^{3} \mu^{2} - 52920 \parameter^{3} \mu + 5400 \parameter^{3}\nonumber\\ 
&- 2376 \parameter^{2} \mu^{5} - 10152 \parameter^{2} \mu^{4} + 70296 \parameter^{2} \mu^{3} - 101496 \parameter^{2} \mu^{2} + 41904 \parameter^{2} \mu\nonumber\\ 
&- 864 \parameter^{2} - 1080 \parameter \mu^{6} + 7920 \parameter \mu^{5} - 19620 \parameter \mu^{4} + 19440 \parameter \mu^{3} - 6480 \parameter \mu^{2}\nonumber\nonumber\\ 
&+ 1296 \mu^{6} - 7992 \mu^{5} + 17712 \mu^{4} - 16416 \mu^{3} + 5184 \mu^{2}.
\label{poly-n3} 
\end{align}
As we can see, the dynamical systems for all three columns of $\Z$ ($\mathbf z_1$, $\mathbf z_2$, and $\mathbf z_3$) have different domains of convergence in the $\parameter$ plane. The corresponding curves are depicted in \cref{473965}, \cref{988969}, and \cref{824432}, respectively.

There are differences also in curves for individuals columns with those for the $2\times 2$ case. Note that a curve $C$ does not contain enough information to find the convergence domain itself. The domains on \cref{473965}--\cref{824432} were found empirically. Of course, they are bound by some parts of $C$ and include the point $\parameter = 0$. The reason for some parts of $C$ not forming the boundary of the domain is that its different parts correspond to different eigenvectors. In other words, they belong to different branches of a multivalued eigenvector function of $\parameter$, the cusps being the branching points.

Consider as a particular example the case $\mathbf z_2$. The curve $C$ intersects itself at $\parameter \approx -0.49$. Above the real axis about this point, one of the two intersecting branches of the curve form the convergence boundary. Below the real axis, the other one takes its place. This indicates that the two branches correspond to different multipliers of the same fixed point. When $\Im \parameter > 0$, one of them crosses the unitary circle at the boundary of the convergence domain;  when $\Im \parameter < 0$, the other one does. At $\parameter \approx -0.49$, both of them cross the unitary circle simultaneously. This situation corresponds, thus, to a Neimark-Sacker bifurcation (the discrete time analog of the Andronov-Hopf bifurcation). Both branches are in fact parts of the same continuous curve that passes through the point $\parameter \approx 0.56$. Around this point, the curve poses no problem to the convergence of the dynamical system. The reason for this is that an excursion around cusps (branching points of eigenvectors) permutes some eigenvectors. As a consequence, the curve at $\parameter \approx -0.49$ corresponds to the loss of stability of the eigenvector that is a continuation of the unique stable eigenvector at $\parameter = 0$ by the path $[0,-0.49]$. At the same time, the same curve at $\parameter \approx 0.56$ indicates a unitary by absolute value multiplier of an eigenvector that is not a continuation of the initial one by the path $[0,0.56]$.

Not all features of the curves depicted this $3\times3$ case are generic either. The particular symmetry with respect to the complex conjugation of the curve and of its cusps is not generic for general complex matrices $D$ and $\Delta$, but it is a generic feature of matrices with real components. In this particular case, due to this symmetry, the only possible bifurcations for real values of $\parameter$ are the flip bifurcations (a multiplier equals to $-1$, typically followed by the cycle doubling cascade), the Neimark-Sacker bifurcation (two multipliers assume complex conjugate values $e^{\pm it}$ for some $t$), and, if the matrices are not symmetric, the fold bifurcation (a multiplier is equal to $1$). With symmetric real matrices, the fold bifurcation is not encountered because the cusps cannot be real but instead form complex conjugated pairs. These features are the consequence of the behavior of $\det(D + \parameter \Delta - x I)$ with respect to complex conjugation and from the fact that symmetric real matrices cannot have nontrivial Jordan forms.

Likewise, Hermitian matrices result in complex conjugate nonreal cusp pairs, but the curve itself is not necessarily symmetric. As a result, there are many more ways for a steady state to lose its stability, from which the fold bifurcation is, however, excluded. Generic bifurcations at $\parameter \in \mathbb R$ here consist in a multiplier getting a value $e^{it}$ for some $t \neq 0$. The situation for general complex matrices lacks any symmetry at all. Here steady states lose their stability by a multiplier crossing the unit circle with any value of $t$, and thus the fold bifurcation, although possible, is not generic. It is generic for one-parameter (in addition to $\parameter$) families of matrices.

As already noted, for the holomorphic dynamics of  any $2\times2$ case the unique critical point is guaranteed to be attracted by the unique attracting periodic orbit, if the latter exists. This, in turn, guarantees that for any $\Delta$ with zero diagonal the iteration of $F$ starting from the identity matrix converges to the needed solution provided that $\parameter$ is in the convergence domain. This is not true anymore for $n > 2$, starting already from the fact that there are no discrete critical points in larger dimensions. The problem of finding a good initial condition becomes non-trivial. As can be seen on ~\cref{988969}, the particular $3\times3$ case encounters this problem for the second column ($\mathbf z_2$). The naive iteration with $\Z^{(0)} = I$ does not converge to the existing attracting fixed point of the dynamical system near some boundaries of its convergence domain. Our current understanding of this phenomenon is the crossing of the initial point by the attraction basin boundary (in the $\mathbf z$-space). This boundary is generally fractal. Perhaps this explains the eroded appearance of the empirical convergence domain of the autonomous iteration.

To somewhat mitigate this complication, we applied a nonautonomous iteration scheme in the form, omitting details, $\mathbf z^{(k+1)} = \mathbf f_2(\mathbf z^{(k)},\parameter (1 - \alpha^k))$ with $\mathbf z^{(0)} = (0,1,0)^T$, where $\alpha$ is some positive number $\alpha < 1$, so that $\lim_{k \to \infty} \parameter (1 - \alpha^k) = \parameter$, and we explicitly indicated the dependence of $\mathbf f_2(\mathbf z,\parameter)$ on $\parameter$. The idea of this \emph{ad hoc} approach is the continuation of the steady state in the extended $(\mathbf z,\parameter)$-phase space from values of $\parameter$ that put $\mathbf z^{(0)}$ inside the convergence domain of that steady state. Doing so, we managed to empirically recover the theoretical convergence domain (see ~\cref{988969}).

Finally, we would like to point out an interesting generic occurrence of a unitary multiplier without the fold bifurcation. For $\mathbf z_1$, this situation takes place at $\parameter \approx 0.45$, for $\mathbf z_2$ at $\parameter \approx 0.56$, and for $\mathbf z_3$ at $\parameter \approx 1.2$. All three points are on the real axis, as  is expected from the symmetry considerations above. There is no cusp at these points and no fold bifurcations (no merging of eigenvectors), as it should be for symmetric real matrices. Instead, another multiplier of the same fixed point goes to infinity at the same value of $\parameter$ (the point becomes super-repelling). As a result, the theorem of the reduction to the central manifold is not applicable.
\begin{figure}[H]
\begin{center}
\includegraphics[width=0.70\columnwidth]{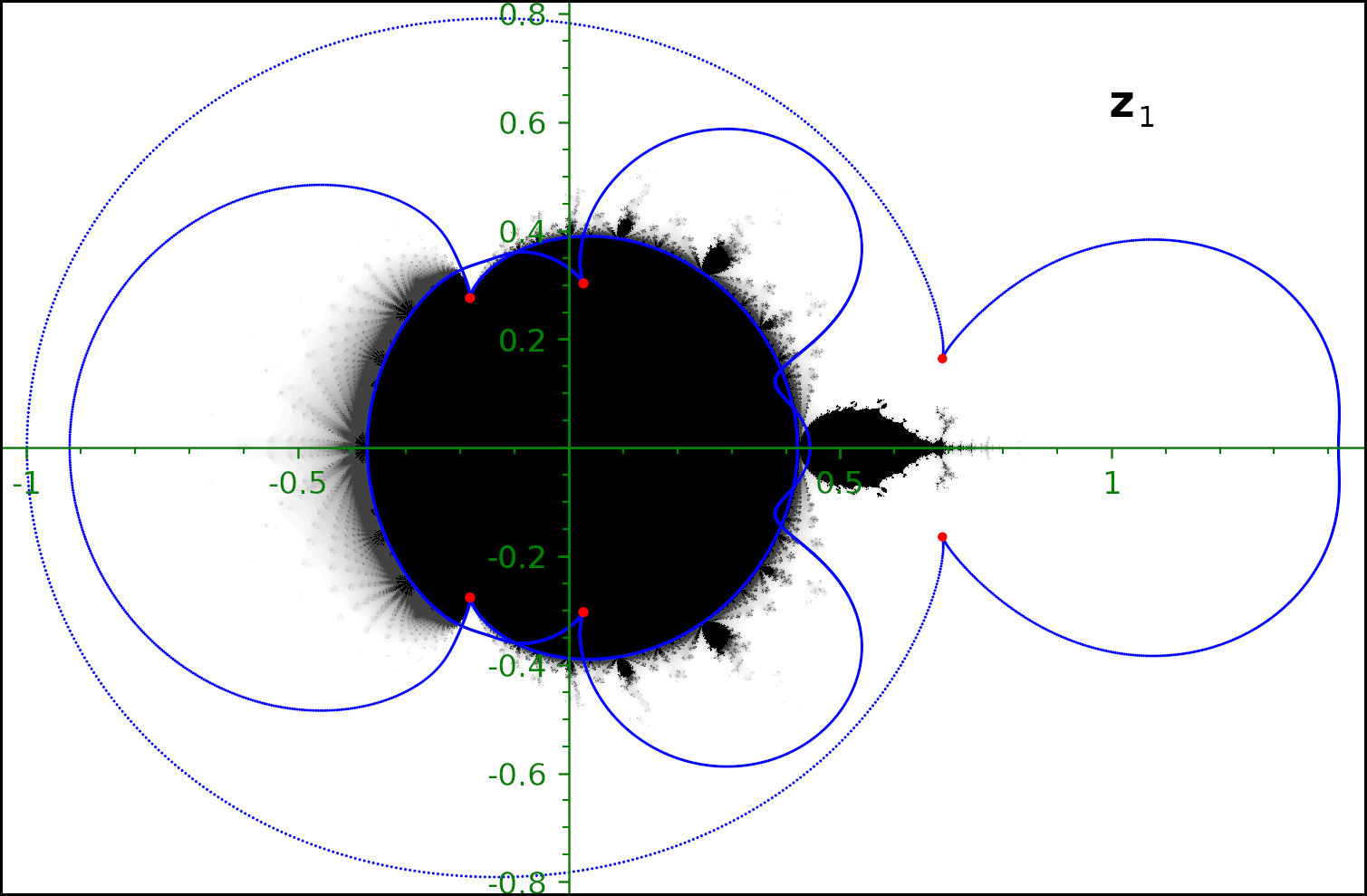}
\caption{{The convergence domain on the~\(\parameter\)-plane for the first
column of~\(\Z\)~(the first eigenvector \(\mathbf z_1\))
for the~\(3\times3\) example. The Mandelbrot-like set (domain where orbits remain bounded) of the iterative scheme is shown in black and grey. The empirical convergence domain is shown in black. Its largest component corresponds to the stability of a steady state (the applicability domain of the iterative method). Small components correspond to stability of various periodic orbits. Various shades of grey show the values of $\parameter$ that lead to divergence to infinity (the darker the slower the divergence). In red are the values of $\parameter$ where the matrix is non-diagonalizable.
{\label{473965}}
}}
\end{center}
\end{figure}
\begin{figure}[H]
\begin{center}
\includegraphics[width=0.70\columnwidth]{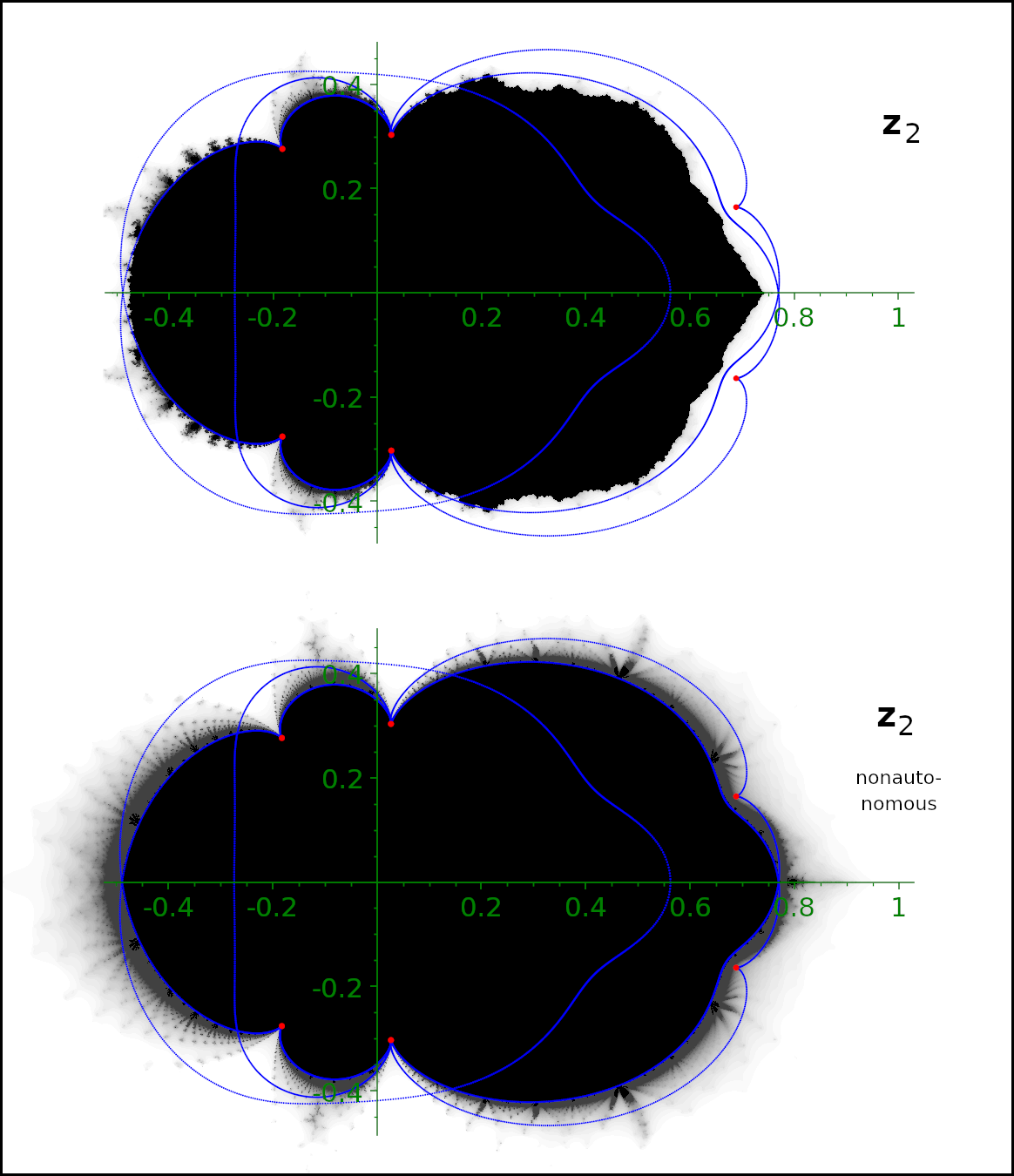}
\caption{{{Same as ~{\cref{473965}} for the second
eigenvector~\(\mathbf z_2\) (the second column of \(\Z\)).
Small components correspond to stability of various periodic orbits.
Various shades of grey show the values of~\(\parameter\) that lead
to divergence to infinity (the darker the slower the divergence).
{\label{988969}}%
}%
}}
\end{center}
\end{figure}
\begin{figure}[H]
\begin{center}
\includegraphics[width=0.70\columnwidth]{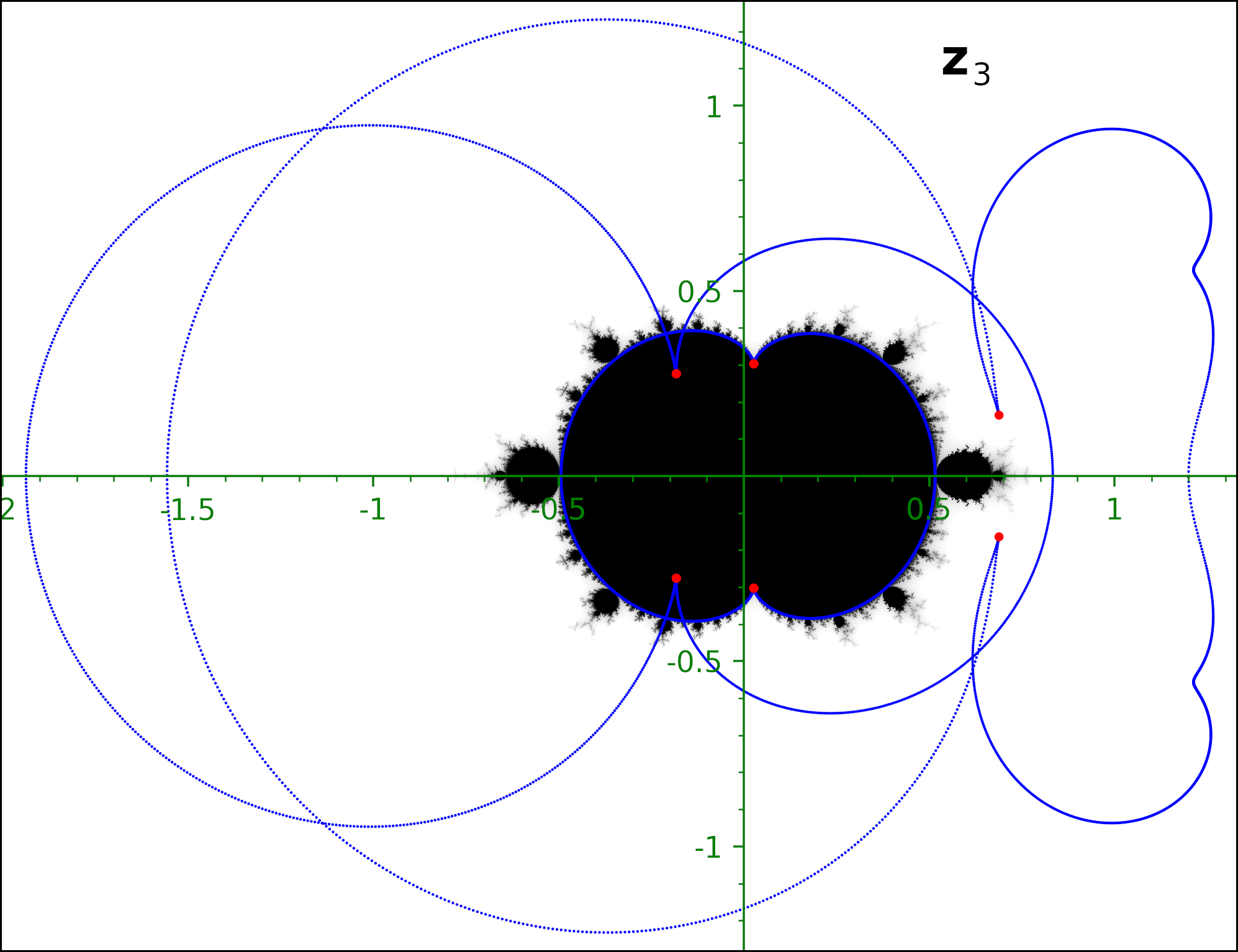}
\caption{{{Same as ~{\cref{473965}} for the third
eigenvector~\(\mathbf z_3\) (the third column of~\(\Z\)) .
{\label{824432}}%
}%
}}
\end{center}
\end{figure}

\section{Exceptional points and cusps of $C$}\label{appendix:cusps}

In a large part of this section we drop the previously adopted notation conventions for vectors. We use instead the more appropriate notation style from differential geometry.

We will prove that if $M$ is defective at some value of $\parameter$, then $d\parameter/dt = 0$ for the curve $C$ at that point (as above, $\mu$ is a multiplier of the dynamical perturbation theory and $C$ is defined by $\mu = e^{it}$ and is locally considered as a curve $t \mapsto \parameter(t)$).

Fix $i$ and consider the dynamical system for the $i$-th column only. Let $\mathbf x = (x_1,\ldots,x_{n-1})$ be a tuple of affine coordinates in the affine chart $U_i$. Note that the coordinates are rearranged in comparison to $z_j$: $x_1 = z_1$, \ldots, $x_{i-1} = z_{i-1}$, $x_i = z_{i+1}$, \ldots, $x_{n-1} = z_n$. We consider the representation of the corresponding dynamical system in $U_i$ too. Let it be given by the tuple of functions $\mathbf h = (h_1,\ldots,h_{n-1})$ that correspond to functions in $\mathbf f_j$ with $(\mathbf f_i)_i$ omitted ($(\mathbf f_i)_i$ is trivially 1 in $U_i$). The same rearrangement is implied for $h_j$ as for $x_j$. Thus, the dynamical system is defined by $\mathbf x^{(k+1)} = \mathbf h(\mathbf x^{(k)})$ and the stationary states are defined by the system of equations $\mathbf x = \mathbf h(\mathbf x)$.

Consider $\mathbb C^{n+1}$ with coordinates $(x_1,\ldots,x_{n-1},\parameter,\nu)$, where $\nu \equiv \mu - 1$, as a complex analytic manifold with these coordinates as global holomorphic coordinates on it. Let us define polynomial functions
$$\mathcal F_j\colon \mathbb C^{n+1} \to \mathbb C, (\mathbf x,\parameter,\nu) \mapsto h_j(\mathbf x,\parameter) - x_j,$$
and $\mathbf {\mathcal F} = (\mathcal F_1,\ldots,\mathcal F_{n-1})$. Let us denote $J \equiv \dfrac{\D\mathcal F}{\D \mathbf x} \equiv \dfrac{\D(\mathcal F_1,\ldots,\mathcal F_{n-1})}{\D(x_1,\ldots,x_{n-1})}$ the Jacobian matrix of $\mathcal F$ with respect to variables $x_j$. Let $I$ be the unitary $(n-1)\times(n-1)$ matrix.

Consider the complex 1-dimensional variety $\mathcal C \subset \mathbb C^{n+1}$ defined by the polynomial system
$$
\begin{cases}
\mathcal F = 0,\\
\det(J - \nu I) = 0.
\end{cases}
$$
Curve $C$ is the projection to the $\parameter$-plane of the real 1-dimensional variety $\tilde C = \mathcal C \cap \{|\nu +1|^2 = 1\}$ in $\mathbb C^{N+1}$ considered as $\mathbb R^{2(N+1)}$.

First, note that if $M$ is defective at some value of $\parameter$, then the system of equations $\mathcal F = 0$ (with this values of $\parameter$ fixed and considered for unknowns $x \in \mathbb C^{n-1}$) has a root $x$ of multiplicity greater than 1. This means that the hyperplanes $\{\mathcal F_j =0\}$ are not in general position at the intersection that corresponds to this root, which implies $\det \D\mathcal F/\D \mathbf x = 0$. Therefore, the point $p \in \mathbb C^{n+1}$ with the same $x$ and $\parameter$ and with $\nu = 0$ belongs to $\mathcal C$ and represents this non-diagonalizability of $M$.

Let $d$ denote the exterior derivative and $\wedge$ denote the exterior product on the complex of holomorphic exterior forms $\Omega^\bullet(\mathbb C^{n+1})$. Alternatively, one may treat it in purely axiomatic way as the K\"ahler differential on the algebra of holomorphic functions on $\mathbb C^{n+1}$ with the appropriate factorization in the end. Let $p \in \mathcal C$ be a point of geometric degeneration of $M$ with $\nu = 0$ as above.

\begin{theorem}
Assume that the following nondegeneration condition holds: $d_p \det J \wedge \bigwedge\limits_j d_p \mathcal F_j \neq 0$. Then $\mathcal C$ can be locally parametrized by $\nu$ around $p$ and, with this parametrization, $d\parameter/d\nu = 0$ at $p$.
\end{theorem}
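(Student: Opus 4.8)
The plan is to establish the two assertions of the theorem separately: the local parametrization of $\mathcal C$ by $\nu$ near $p$ via the holomorphic implicit function theorem, and then the vanishing $d\parameter/d\nu=0$ at $p$ by differentiating the fixed-point equations along that parametrization and contracting with a left null vector of the (singular) Jacobian $J$ at $p$. Before either step I would record the one bookkeeping fact on which everything hinges: the functions $\mathcal F_1,\dots,\mathcal F_{n-1}$ and $\det J$ depend only on $(\mathbf x,\parameter)$ and not on $\nu$, and $\det(J-\nu I)|_{\nu=0}=\det J$. Hence each of the covectors $d_p\mathcal F_1,\dots,d_p\mathcal F_{n-1},d_p\det J$ has vanishing $d\nu$-component, so the hypothesis $d_p\det J\wedge\bigwedge_j d_p\mathcal F_j\neq 0$ is equivalent to saying that these $n$ covectors form a basis of $\operatorname{span}\{dx_1,\dots,dx_{n-1},d\parameter\}$, i.e.
\[
\det\frac{\D(\det J,\ \mathcal F_1,\dots,\mathcal F_{n-1})}{\D(\parameter,\ x_1,\dots,x_{n-1})}(p)\neq 0 .
\]

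First, for the parametrization, I would apply the holomorphic implicit function theorem to the map $((\mathbf x,\parameter),\nu)\mapsto(\mathcal F_1,\dots,\mathcal F_{n-1},\det(J-\nu I))$ in a neighbourhood of $p$, taking $\nu$ as the independent variable. Its Jacobian with respect to $(\mathbf x,\parameter)$, evaluated at $p$ (where $\nu=0$, so that $\det(J-\nu I)$ and $\det J$ have the same first derivatives in $\mathbf x,\parameter$), coincides, up to reordering rows and columns, with the matrix in the display above, hence is invertible by hypothesis. The implicit function theorem then yields holomorphic functions $\mathbf x(\nu),\parameter(\nu)$ agreeing with the $(\mathbf x,\parameter)$-coordinates of $p$ at $\nu=0$, such that near $p$ the variety $\mathcal C$ is exactly the graph $\nu\mapsto(\mathbf x(\nu),\parameter(\nu),\nu)$; this is the asserted parametrization by $\nu$.

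Second, for the derivative, I would differentiate the identity $\mathcal F(\mathbf x(\nu),\parameter(\nu))\equiv 0$ with respect to $\nu$, obtaining $J\,\dot{\mathbf x}+\dot\parameter\,\mathbf F_\parameter=0$, where $\mathbf F_\parameter\equiv\D\mathcal F/\D\parameter$ and the dot denotes $d/d\nu$, and then evaluate at $p$. Since $p$ is a point of geometric degeneration of $M$ we have $\det J(p)=0$ (as noted just before the theorem statement), so there is a nonzero row vector $\mathbf v^{\mathrm T}$ with $\mathbf v^{\mathrm T}J(p)=0$; left-contracting the differentiated identity with $\mathbf v^{\mathrm T}$ kills the first term and leaves $\dot\parameter(0)\,\bigl(\mathbf v^{\mathrm T}\mathbf F_\parameter(p)\bigr)=0$. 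To finish I would argue that $\mathbf v^{\mathrm T}\mathbf F_\parameter(p)\neq 0$: otherwise $\mathbf v$ would be a nonzero left null vector of the $(n-1)\times n$ matrix $[\,J(p)\mid\mathbf F_\parameter(p)\,]$, whose rows are the covectors $d_p\mathcal F_1,\dots,d_p\mathcal F_{n-1}$ — but those are linearly independent, since otherwise $\bigwedge_j d_p\mathcal F_j$ and a fortiori the full wedge in the hypothesis would vanish. This contradiction gives $\dot\parameter(0)=0$, i.e.\ $d\parameter/d\nu=0$ at $p$.

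I expect the only genuine (and fairly mild) obstacle to be the initial bookkeeping: one must be scrupulous about which of $\mathcal F_j$, $\det J$, $\det(J-\nu I)$ depends on which coordinates, in order to see the single hypothesis playing both roles — invertibility for the implicit function theorem, and linear independence of the $d_p\mathcal F_j$ for the last step. As a side remark, the hypothesis also forces $d_p\det J\neq 0$, so by Jacobi's formula $J$ has corank exactly one at $p$ (consistent with the picture that a single pair of eigenvectors of $M$ merges there), though I would not need this explicitly. A slightly more geometric variant would instead first cut out the smooth curve $\gamma=\{\mathcal F=0\}$ in $(\mathbf x,\parameter)$-space, observe that $\nu=0$ is an eigenvalue of $J$ along $\gamma$ at $p$, and track that eigenvalue; I would expect it to require essentially the same computation.
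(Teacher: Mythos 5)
Your proof is correct, and its logical skeleton matches the paper's: the nondegeneracy hypothesis yields, via the holomorphic implicit function theorem, a local parametrization of $\mathcal C$ by $\nu$, and the degeneracy $\det J(p)=0$ at the defective point then forces $d\parameter/d\nu=0$. What differs is the machinery. The paper works with holomorphic exterior forms: it sets $\omega_p = d_p\det(J-\nu I)\wedge\bigwedge_j d_p\mathcal F_j$, shows $\omega_p\neq 0$ from the hypothesis, and, contracting with a tangent vector $v$ to $\mathcal C$, derives $v^\parameter=0$ from the identity $d_p\parameter\wedge\bigwedge_j d_p\mathcal F_j=\det J|_p\,d_p\parameter\wedge\bigwedge_j d_p x_j=0$ and $v^\nu\neq0$ from $d_p\nu\wedge\omega_p\neq0$, invoking the implicit function theorem only at the end. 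You instead translate the hypothesis into invertibility of the Jacobian of $(\mathcal F_1,\dots,\mathcal F_{n-1},\det J)$ with respect to $(\mathbf x,\parameter)$ (legitimate, since none of these functions depends on $\nu$ and the $\nu$-dependent terms of $\det(J-\nu I)$ contribute nothing to the $(\mathbf x,\parameter)$-derivatives at $\nu=0$), apply the implicit function theorem first, and then get $\dot\parameter(0)=0$ by differentiating $\mathcal F(\mathbf x(\nu),\parameter(\nu))=0$ and pairing with a left null vector $\mathbf v$ of $J(p)$; your step showing $\mathbf v^{\mathrm T}\mathcal F_\parameter(p)\neq 0$ from the linear independence of the $d_p\mathcal F_j$ is exactly the dual, matrix-language formulation of the paper's wedge computation. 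The two arguments are equivalent in content; yours is more elementary (linear algebra plus the IFT, no exterior calculus) and makes the corank-one mechanism explicit, while the paper's form language packages the same linear algebra coordinate-freely and delivers the tangent-direction facts $v^\parameter=0$, $v^\nu\neq 0$ without first fixing the parametrization. Your side remark that the hypothesis forces corank exactly one is also correct, and, as you note, not needed.
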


\begin{proof}
Let $T_p \mathbb C^{n+1}$ be the holomorphic tangent space to $\mathbb C^{n+1}$ at $p$, that is the tangent space spanned by the holomorphic vector fields $\D_j \equiv \D/\D x_j$, $\D_\parameter \equiv \D/\D \parameter$, $\D_\nu \equiv \D/\D \nu$ at $p$. Let us denote $\iota_u \sigma_p$ the contraction of a holomorphic form $\sigma$ ($\sigma_p \in \bigwedge^\bullet_p \mathbb C^{n+1}$) with a tangent vector $u \in T_p \mathbb C^{n+1}$ at point $p$.

Let us denote $\omega_p \equiv d_p \det (J - \nu I) \wedge \bigwedge\limits_j d_p \mathcal F_j$ and $\varpi_p \equiv d_p \det J \wedge \bigwedge\limits_j d_p \mathcal F_j$. By the premise, $\varpi_p \neq 0$, which also implies $\omega_p \neq 0$. Indeed, the free term (with respect to $\nu$) of the polynomial $\det(J - \nu I)$ is equal to $\det J$, and thus

\begin{equation}
d_p \det(J - \nu I) = \D_\nu \det(J - \nu I)|_p\, d_p \nu + d_p \det J,
\label{eq-nu} 
\end{equation}
where the two terms are linearly independent because $\D_\nu \det J = 0$. Therefore, as non of $\mathcal F_j$ depends on $\nu$, $\omega_p$ differs from $\varpi_p$ by an addition of a linearly independent term.

Let $v \in T_p \mathbb C^{n+1}$ be a nonzero vector with coordinates $(v^i,v^\parameter,v^\nu)$ tangent to $\mathcal C$. It means that $v\det (J - \nu I) = v\mathcal F_j = 0$, where by $vf$ we denote the action of a vector $v$ on a function $f$. This, in turn, implies $\iota_v \omega_p = 0$.

As all $\mathcal F_j$ depend only on $x$ and $\parameter$, we have $d_p \parameter \wedge \bigwedge\limits_j d_p \mathcal F_j = \det J|_p\, d_p \parameter \wedge \bigwedge\limits_j d_p x_j = 0$, and thus $d_p \parameter \wedge \omega_p = 0$. Therefore, we have
$$
\iota_v (d_p \parameter \wedge \omega_p) = -d_p \parameter \wedge \iota_v \omega_p + v^\parameter \omega_p = v^\parameter \omega_p = 0.
$$
This implies $v^\parameter = 0$.

On the other hand, by \cref{eq-nu} we have
$$d_p \nu \wedge d_p \det(J - \nu I) = d_p \nu \wedge d_p \det J,$$
and thus $d_p \nu \wedge \omega_p = d_p \nu \wedge \varpi_p \neq 0$. But $d \nu \wedge \omega \in \Omega^{n+1}(\mathbb C^{n+1})$, and therefore, for any holomorphic tangent vector $u \in T_p \mathbb C^{n+1}$, $u \neq 0$ is equivalent to $\iota_u (d_p \nu \wedge \omega_p) \neq 0$. This results in
$$
\iota_v (d_p \nu \wedge \omega_p) = -d_p \nu \wedge \iota_v \omega_p + v^\nu \omega_p = v^\nu \omega_p \neq 0,
$$
and thus $v^\nu \neq 0$.

By the holomorphic implicit function theorem, $\mathcal C$ can be holomorphically parametrized by $\nu$ in a neighborhood of $p$. Together with $v^\parameter = 0$ it implies that we have $d\parameter/d\nu|_p = 0$ on $\mathcal C$.
\end{proof}

Now, consider a smooth real curve parametrized by a real parameter $t$ on the complex $\nu$-plane that without degeneracy passes through 0. This curve is lifted to $\mathcal C$ and the resulting smooth real curve is parametrized by $t$. From $d\parameter/d\nu = 0$ we conclude that $d\parameter/dt = 0$ at $p$ too. In our case we have the curve $\mu(t) = e^{it}$ or $\nu(t) = e^{it} - 1$, which passes through $\mu = 1$ at $t = 0$. $\tilde C$ is projected without degeneration to $\mathbb C \times \mathbb R \ni (\parameter,t)$ locally near $p$ and then to $\mathbb C \ni \parameter$ with degeneration at the projection of $p$ given by $d\parameter/dt = 0$.

\end{document}